\begin{document}

\newtheorem{thm}{Theorem}[section]
\newtheorem{lem}[thm]{Lemma}
\newtheorem{prop}[thm]{Proposition}
\newtheorem{cor}[thm]{Corollary}
\newtheorem{con}[thm]{Conjecture}
\newtheorem{claim}[thm]{Claim}
\newtheorem{obs}[thm]{Observation}
\newtheorem{definition}[thm]{Definition}
\newtheorem{example}[thm]{Example}
\newtheorem{rmk}[thm]{Remark}
\newcommand{\di}{\displaystyle}
\def\dfc{\mathrm{def}}
\def\cF{{\cal F}}
\def\cH{{\cal H}}
\def\cT{{\cal T}}
\def\C{{\mathcal C}}
\def\cA{{\cal A}}
\def\cB{{\mathcal B}}
\def\P{{\mathcal P}}
\def\Q{{\mathcal Q}}
\def\cP{{\mathcal P}}
\def\cp{\alpha'}
\def\Frk{F_k^{2r+1}}

\newcommand{\F}{\mathcal{F}}

\title{Covering $2$-connected $3$-regular graphs with disjoint paths}
\author{Gexin Yu}
\address{Department of Mathematics\\
The College of William and Mary\\
Williamsburg, VA, 23185, gyu@wm.edu.}

\thanks{The research is supported in part by the NSA grant  H98230-16-1-0316.}

\date{\today}

\maketitle

\begin{abstract}
A path cover of a graph is a set of disjoint paths so that every vertex in the graph is contained in one of the paths.    The path cover number $p(G)$ of graph $G$ is the cardinality of a path cover with the minimum number of paths.    Reed in 1996 conjectured that a $2$-connected $3$-regular graph has path cover number at most $\lceil n/10\rceil$.   In this paper, we confirm this conjecture.

\medskip
\noindent{\bf Keywords:}  Path cover, regular graphs, discharging

\noindent{\bf AMS subject class:} 05C38, 05C70
\end{abstract}

\section {Introduction}

A {\em path cover} of a graph is a set of disjoint paths that contain all the vertices of the graph. The {\em path cover number} of graph $G$, written as $p(G)$, is the cardinality of a path cover with the minimum number of paths. 

Ore~\cite{O61} initiated the study of path covers.  A graph has path cover number $1$ precisely when it has a Hamiltonian path.  It is well-known that if the minimum degree of an $n$-vertex graph is at least $n/2$ then the graph is Hamiltonian.  Because of its natural connection with hamiltonian graphs, people were interested in the sufficient conditions for a graph to have path cover number at most $k\ge 2$, see, for example, \cite{H88, N75}.   In more recent years, path covers have been used to study other graph parameters,  such as domination numbers~\cite{R96, KS05, LR08},  $L(2,1)$-labelling~\cite{GMW94},  independence number~\cite{H88}, and graphic-TSP~\cite{FRS14}, just to name a few.

Every $n$-vertex graph have a path cover of order at most $n$, and one would imagine that a graph with more edges will require fewer paths to cover. However, an $n$-vertex graph with minimum degree $t$ could have path cover number as high as $n-2t$, for example $K_{t, n-t}$.  Thus,  we are more interested in path cover of regular graphs.   Jackson~\cite{J80} showed that $2$-connected $k$-regular graphs with at most $3k+1$ vertices have a hamiltonian path (actually they have a hamiltonian cycle except the Petersen graph), thus the path cover number is $1$. Magnant and Martin~\cite{MM09} studied path cover numbers of $k$-regular graphs for $k\ge 3$, and they showed that for $k\le 5$, a $k$-regular graph has path cover number at most $n/(k+1)$, which they conjectured to be true for $k>5$.  Note that if every component of a graph $G$ is a clique of $k+1$ vertices, then $p(G)=n/(k+1)$, thus the bound is sharp for general graphs.  As they pointed out,  it is more difficult to find the path cover numbers of connected regular graphs.

The following example gives a general lower bound for the path cover numbers of connected $k$-regular graphs. Take $K_{2, k-1}$ and replace every vertex of degree $2$ with $K_{k+1}^-$ (a $k+1$-clique minus an edge), and call this graph $H$, in which two vertices have degree $k-1$ and the rest have degree $k$. Now let $G$ be the $k$-regular graph with $n$ vertices formed from $\frac{n}{k^2+1}$ pairwise disjoint $H$ by adding $\frac{n}{k^2+1}$ edges to link them in a ring.   It is not hard to see that the path cover number of $G$ is at least $\frac{n(k-3)}{k^2+1}$ for $k\ge 5$. Therefore for $k\ge 13$,  one cannot find a path cover with fewer than $n/(k+4)$ paths in connected $k$-regular graphs (note that the examples are actually $2$-connected).  Some more examples from \cite{OW10, OW11} also show that $n/(k+4)$ paths are necessary.   

Intuitively, one may need more paths to cover the vertices when there are fewer edges in the graphs.  This initiated the study of path covers for connected $3$-regular graphs. Reed~\cite{R96} showed that {\em a connected $3$-regular graph with $n$ vertices has path cover number at most $\lceil n/9\rceil$}, and also gave examples that need $\lceil n/9\rceil$ paths.    He conjectured \cite{R96} that  it suffices to use at most $\lceil n/10\rceil$ paths to cover $2$-connected $3$-regular graphs.  In this article, we confirm this conjecture.

\begin{thm}\label{main}
Every $2$-connected $3$-regular graph with $n\ge 10$ vertices has path cover number at most $n/10$.
\end{thm}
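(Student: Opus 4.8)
The plan is to argue by contradiction through a discharging argument on a carefully chosen minimum path cover. Suppose the theorem fails, and let $G$ be a $2$-connected $3$-regular graph on $n \ge 10$ vertices with $p(G) > n/10$ that is vertex-minimal among all such counterexamples. Fix a path cover $\cP = \{P_1, \dots, P_k\}$ with $k = p(G)$ paths, and among all minimum path covers choose one that is extremal for a secondary potential --- concretely, one minimizing the number of \emph{short} paths (paths with at most nine vertices), with ties broken by maximizing $\sum_i \ell_i^2$, where $\ell_i = |V(P_i)|$. The first task is to record the consequences of this extremality. Since $G$ is cubic, each endpoint of a path sends two edges out of its path and each internal vertex sends one; I would first show that no two endpoints of distinct paths are adjacent (otherwise the two paths merge, contradicting minimality of $k$), and more generally use P\'osa-type rotations to generate from each path a large family of alternative endpoints, so that an edge from any of these to an endpoint of another path would again permit a merge. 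This forces the external edges leaving short paths to land, in bulk, on the interiors of other paths rather than near their ends.

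With these reducibility lemmas in hand, I would set up the discharging. Assign to each path $P_i$ an initial charge equal to $\ell_i$, so that the total charge is exactly $n$; the goal is to redistribute charge along the edges of $G$ joining distinct paths so that every path finishes with charge at least $10$, which yields $n \ge 10k$ and the contradiction $p(G) = k \le n/10$. The rules would move a fixed fraction of charge from long paths (those carrying a surplus above $10$) to short paths across each external edge connecting them, calibrated so that a path already holding at least $10$ vertices never drops below $10$. Verifying the rules for long paths is routine: a path with $\ell \ge 10$ vertices has exactly $\ell + 2$ external edges, giving it ample charge to donate while keeping its own total above threshold.

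The crux, and the step I expect to be the main obstacle, is showing that every short path ends with charge at least $10$. Here one must rule out clusters of short paths that are poorly connected to the rest of $G$ and hence cannot collect enough charge. This is exactly where $2$-connectivity enters: it forbids the cut-vertex separations that underlie the extremal ``ring of cliques'' constructions, and it guarantees that a short path --- or a tightly interlinked group of short paths --- has at least two disjoint routes to the remainder of the cover, forcing enough external edges to long paths to cover its deficit. I would organize this by the length of the shortest path present: a single vertex, a single edge, or a path on at most nine vertices has $\ell + 2$ external edges, and using the rotation lemmas together with $2$-connectivity I would show these edges cannot all terminate on other short paths in the same cluster without producing a reducible configuration --- an augmenting rearrangement that either lowers $k$ or improves the secondary potential. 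The bookkeeping for the borderline lengths $\ell \in \{7,8,9\}$, where the surplus-to-deficit margins are tightest, is where the discharging constants must be chosen with care, and I expect the bulk of the case analysis to concentrate there.
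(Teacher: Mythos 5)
Your plan has the right general shape---an extremal minimum path cover plus discharging---but it has a concrete error and leaves the entire engine of the argument unbuilt. The error first: you assert that a path with $\ell$ vertices ``has exactly $\ell+2$ external edges.'' In a cubic graph the degree sum over the path's vertices leaves $\ell+2$ edge-slots not used by the $\ell-1$ path edges, but each \emph{chord} of the path consumes two of these slots, so the number of edges actually leaving the path is $\ell+2-2c$ where $c$ is the number of chords, and this can be as small as $2$ (the paper's Lemma on cyclic paths only guarantees two exits, via $2$-connectivity). Since your whole calibration of donation rules rests on long paths having a guaranteed surplus of external edges, this undercuts the step you call ``routine.'' More fundamentally, the difficulty in this problem is not where you locate it. It is not about short paths failing to collect charge from long neighbors; a minimum path cover of a cubic graph can consist entirely of long paths and still have too many of them. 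The real obstruction is \emph{local to each path}: it concerns where the two off-path (or chordal) neighbors of each endpoint land, because those are the only vertices that can ``absorb'' the excess. The paper's discharging is accordingly vertex-level, not path-level: each endpoint sends weight $2$ along its two non-path edges to the receiving vertex (a ``weighty'' vertex if on the same path, ``heavy'' if on another), a second round involves ``pseudo-endpoints,'' and the counting lemma shows each path retains weight at most its order by balancing ``heavy pairs'' against neutral vertices. Nothing in your proposal produces or substitutes for this mechanism.

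The second gap is in your choice of extremal cover. Minimizing the number of short paths and maximizing $\sum_i \ell_i^2$ will not eliminate the specific reducible configurations that break the count: the paper needs seven nested optimality conditions (no $1$-paths, no $3$-paths or cyclic paths, no ``bad'' endpoints adjacent to $u_1,u_4$ with $xu_3\in E(G)$, no ``annoying'' endpoints, fewest weighty vertices, endpoint-neighbors pushed far from the ends), each tailored to kill one family of paths that would otherwise end up with too much weight (e.g.\ a $5$-path $xu_1u_2u_3y$ with $xu_2,xu_3\in E(G)$). Your generic potential gives you none of these, and P\'osa rotation alone does not: rotations preserve the vertex set of a path, whereas several of the needed reductions exchange vertices between two paths. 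You have correctly identified that $2$-connectivity and rotation-based merging arguments are the tools, and the contradiction $10k\le n$ is the right target, but as written the proposal is a restatement of the problem's difficulty rather than a resolution of it: the case analysis you defer to ``the borderline lengths $\ell\in\{7,8,9\}$'' is not where the work is, and the work that is actually required (the classification of heavy, light, and pseudo-endpoint vertices and the per-path weight bound) is absent.
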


It follows that every $2$-connected $3$-regular graph with at most $20$ vertices contains a hamiltonian path.  %Note that there are $2$-connected $3$-regular graphs with $28$ vertices that contain no spanning paths. 
Reed~\cite{R96} gave the following example to show that one cannot improve $\lceil n/10\rceil$ in general:    let $C=u_1v_1u_2v_2\ldots u_kv_k$ be a cycle of $2k$ vertices, let $H$ be the the graph obtained from the Petersen graph by removing an edge, say $uv$, and let $G$ be the graph obtained by replacing edge $u_iv_i$ for $1\le i\le k$ with $H$ so that $u=u_i$ and $v=v_i$.  He claimed that the path cover number of $G$ is $n/10$, based on the observation that one needs a path to cover each $H$.   However, we can use one path to cover two consecutive copies of $H$, thus only need $n/20$ paths to cover $V(G)$.    Here we give infinitely many $2$-connected $3$-regular $n$-vertex graphs whose path cover numbers are at least $n/14$.  

\begin{thm}
There are infinitely many $2$-connected $3$-regular $n$-vertex graphs whose path cover numbers are at least $n/14$.
\end{thm}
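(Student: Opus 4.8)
The plan is to realize $G$ as a cyclic chain of many copies of a fixed gadget $F$ and then show, by an endpoint‑counting argument, that every path cover must spend two path‑ends inside each copy. Let $F$ be a $2$-connected graph on $14$ vertices that is cubic except at two designated \emph{attachment vertices} $a,b$, each of degree $2$. For every $k\ge 3$ take disjoint copies $F_1,\dots,F_k$ with attachment vertices $a_i,b_i$, and form $G$ by adding the edges $b_ia_{i+1}$ for all $i$ (indices modulo $k$). Every vertex of $G$ then has degree $3$; and since each $F_i$ is $2$-connected and the copies are wound into a single ring, no attachment vertex is a cut vertex and no joining edge is a bridge, so $G$ is a $2$-connected cubic graph on $n=14k$ vertices. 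This will produce infinitely many graphs, and the goal becomes $p(G)\ge k=n/14$.

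For the count, fix a path cover $\mathcal{P}$ and, for each copy, let $\mathrm{ei}(F_i)$ be the number of endpoints of paths of $\mathcal{P}$ lying in $F_i$ (a path with an edge contributes its two ends, a single‑vertex path two coincident ends). Since every path has exactly two ends and every vertex lies in some copy, $\sum_i \mathrm{ei}(F_i)=2p(G)$, so it suffices to prove $\mathrm{ei}(F_i)\ge 2$ for each $i$. The crux is to engineer $F$ so that
\begin{enumerate}[(i)]
\item $F$ has no Hamiltonian path from $a$ to $b$; and
\item neither $a$ nor $b$ is an endpoint of any Hamiltonian path of $F$.
\end{enumerate}
Granting these, I would verify $\mathrm{ei}(F_i)\ge 2$ by cases on which of the joining edges $b_{i-1}a_i$ and $b_ia_{i+1}$ are used by $\mathcal{P}$. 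Restricting $\mathcal{P}$ to $F_i$ partitions $V(F_i)$ into subpaths whose only exits are at $a_i,b_i$. If no joining edge at $F_i$ is used, all subpaths are complete paths of $\mathcal{P}$, giving at least two internal ends. If exactly one is used, say at $a_i$, the subpath through $a_i$ cannot be Hamiltonian by (ii), so a complete internal subpath remains and again forces at least two ends. If both joining edges are used by one path traversing $F_i$ from $a_i$ to $b_i$, then by (i) this traversal is not Hamiltonian, and a leftover complete subpath supplies two ends; while if the two edges are used by two distinct paths, each contributes one internal end. In every case $\mathrm{ei}(F_i)\ge 2$, and summing yields $2p(G)\ge 2k$.

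The remaining task, and the place I expect the real difficulty, is to construct a concrete $F$ satisfying (i) and (ii). The natural engine of non‑Hamiltonicity is the Petersen graph $P$: it has no Hamiltonian cycle, so deleting an edge $xy$ leaves no Hamiltonian path from $x$ to $y$, and its hypohamiltonicity can be used to forbid Hamiltonian paths with a prescribed endpoint. I would build $F$ from $P$ together with a small four‑vertex attachment that carries $a$ and $b$ and is wired into $P$ at two vertices so that (a) a Hamiltonian path of $F$ from $a$ to $b$ would force a Hamiltonian cycle of $P$, and (b) a Hamiltonian path of $F$ ending at $a$ (or at $b$) would likewise force a Hamiltonian cycle, or a forbidden Hamiltonian path, in a Petersen subgraph. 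The main obstacle is the delicate finite check that a single wiring simultaneously keeps $F$ a $2$-connected cubic gadget on $14$ vertices and reduces both (i) and (ii) to the absence of a Hamiltonian cycle in $P$; once such an attachment is exhibited, the verification is a bounded case analysis, but finding a wiring that does all of this at once is the heart of the argument.
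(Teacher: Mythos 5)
Your outer framework is fine: the ring of $k$ disjoint copies of a cubic-except-at-$\{a,b\}$ gadget joined by the edges $b_ia_{i+1}$ is $2$-connected and $3$-regular, the identity $\sum_i \mathrm{ei}(F_i)=2p(G)$ is correct, and the case analysis showing $\mathrm{ei}(F_i)\ge 2$ does follow from properties (i) and (ii). But the proof has a genuine gap: the gadget $F$ is never constructed. Everything rests on the existence of a $2$-connected $14$-vertex graph, cubic except at two degree-$2$ vertices $a,b$, with no Hamiltonian $a$--$b$ path and no Hamiltonian path ending at $a$ or at $b$; you explicitly defer this to a ``delicate finite check'' and call finding the wiring ``the heart of the argument.'' That is exactly right --- it is the heart of the argument, and it is missing. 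The Petersen-based plan is not automatic: the Petersen graph minus an edge $uv$ does kill Hamiltonian $u$--$v$ paths, but it still has Hamiltonian paths ending at $u$ (and at $v$), so condition (ii) fails for the bare $10$-vertex gadget, and whether a $4$-vertex attachment can simultaneously restore cubicity, preserve $2$-connectivity, and enforce both (i) and (ii) is precisely the unverified claim. As submitted, this is a proof strategy conditional on an unexhibited object, not a proof.

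It is worth noting that the paper's argument sidesteps Hamiltonicity entirely. It starts from an arbitrary $2$-connected cubic graph $G_0$ and replaces every edge by a $K_4^-$, giving $H$ with $n(H)=7n(G_0)$; a path cover of $H$ is then converted into a path decomposition of $G_0$ (edge-disjoint paths covering all edges) of the same cardinality, and since every vertex of $G_0$ has odd degree $3$, every vertex must be an endpoint of some path in the decomposition, while each path supplies only two endpoints. This parity count immediately gives at least $n(G_0)/2=n(H)/14$ paths, with no case analysis and no non-Hamiltonicity gadget. If you want to salvage your approach, either exhibit and verify a concrete $F$, or replace conditions (i)--(ii) by a parity obstruction of the kind the paper uses.
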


\begin{proof}
 Let $G$ be an arbitrary 2-connected 3-regular graph, and let $H$ be the graph obtained from $G$ by replacing each edge of $G$ with a $K_4^-$ (that is, delete the edge, and connect two endpoints of the edge to the two degree-2 vertices on $K_4^-$, respectively).  Then $n(H)=n(G)+4\cdot \frac{3n(G)}{2}=7n(G)$.  We now show that $p(H)\ge n(G)/2=n(H)/14$.

Let $\P$ be a path cover of $H$.  Let $e=uv$ be the edge between $u\in V(G)$ and $v$ in some $K_4^-$.  Then either $uv$ is on a path of $\P$, or $v$ is on some path in $\P$ that contains all vertices of the $K_4^-$.  In the latter case, we may reroute the path so that $v$ is an endpoint, thus extend the path to include the edge $vu$.  Therefore, we may obtain a path decomposition $\P'$ of $G$ (a set of edge-disjoint paths $\P'$ containing all the edges of $G$) with $|\P'|=|\P|$.   Each path in $\P'$ contains a vertex in $G$ as either an internal point or an endpoint, and only when it is an endpoint, the parity of its degree changes when we remove the edges on the path.  But each path can only change the degree parities of at most two vertices in $G$.  As $G$ has $n(G)$ vertices whose degree parity need to be changed, there are at least $n(G)/2$ paths in $\P'$.   Thus,  $\P$ contains at least $n(H)/14$ paths.
\end{proof}

 It is an interesting question to determine the sharp bounds for the path cover numbers of $2$-connected $3$-regular graphs in terms of the orders of the graphs. \\

We will often use the following notation for a path and its segments.
A {\em $k$-path} is a path of $k$ vertices. For a $k$-path $P$, if $G[V(P)]$ contains a spanning cycle, we call it a {\em cyclic $k$-path} or a {\em $k$-cycle}, otherwise {\em non-cyclic}.   A vertex on a non-cyclic path $P$ is called {\em weighty} if it is adjacent to an endpoint of $P$ by an edge not on $P$.   If a path $P$ contains vertex $x$, then we sometimes write $P$ as $P_v$, and let $v^-, v^+$ be the vertices (neighbors) next to $v$ on $P$, respectively.  If the endpoints of $P_v$ are $x$ and $y$, then we also write $P_v$ as $xPy$,  or even as $xPv^-vv^+Py$.  We will use $uPv$ to denote the segment on $P$ from $u$ to $v$. If $v$ is an endpoint of $P_v$, we sometime use $P_vv$ to denote the path $P_v$ with endpoint $v$.  For other notation, we refer to West~\cite{W01}.

\section{Sketch of the proof of Theorem~\ref{main}}

The idea of the proof of the theorem is quite simple.  We consider a specially chosen minimal path cover $\P$, and assign a weight of $10$ to each path in the cover initially. We then redistribute the weights among the paths and show that the final weight on each path is at most its order.  It follows that the total weight is $10|\P|$ on one hand, and at most $n$ on the other hand, therefore $|\P|\le n/10$.  The difficulty lies on the choice of minimal path cover and on the way to redistribute the weights.  Below we give some insights on how we make the choices.

In the minimal path covers we can show that none of the paths are single vertices or contain a spanning cycle.  We may think that the weights of the paths are all on the endpoints, 5 for each.  Let $x$ be an endpoint of a path $P$, and $xu$ be an edge not on $E(P)$.  Clearly, $u$ is not an endpoint of another path, or we will combine the paths into one to get a cover with fewer paths.    We transfer a weight of $2$ from $x$ to $u$.  The vertex $u$ is called {\em weighty} if $u$ is on $P$ and {\em heavy} otherwise.  Then on each path, the four edges incident with the endpoints will send out a weight of $8$ and only a weight of $1$ remains on each endpoint.

Note that heavy vertices are not next to each other on the paths, or we can rearrange the paths to get a cover with fewer paths.  Therefore, if there are no consecutive vertices on paths that are either heavy or weighty, and the number of vertices on a path is not odd, then the final weight on each path is at most the number of vertices on the path, as desired.  Therefore the problematic cases are the existence of consecutive weighty and heavy vertices,  or the number of vertices is odd and every other vertex on a path is weighty or heavy.  They force us to identify more vertices to transfer weights from one path to another, and suggest such vertices to be the ones incident with a vertex whose neighbors are weighty and heavy on the path.  It turns out that we only need to use such vertices, namely {\em P(seudo)E(ndpoint)-vertices}, to transfer a weight of $1$.  PE-vertices share a lot of common features with the endpoints.  For example, a light vertex cannot be next to a heavy vertex on a path, where a light vertex is the neighbor of a PE-vertex not on the path of the PE-vertex.  Light vertices make the proof more complicated.

There are still bad situations that a path may have too much weight. For example, a 3-path with a heavy middle vertex, or a $5$-path $P=xu_1u_2u_3y$ such that $xu_2, xu_3\in E(G)$, or a $6$-path $P=xu_1u_2u_3u_4y$ such that $xu_3, x'u_1, x'u_4\in E(G)$ where $x'$ is an endpoint of another path, or a $7$-path $P=xu_1u_2u_3u_4u_5y$ such that $xu_4, x'u_1, x'u_5\in E(G)$ where $x'$ is an endpoint of another path. Fortunately, we can carefully define the optimal path covers to avoid all those situations.

Each pair of consecutive heavy/weighty vertices on a path contains a neighbor of the endpoints, so there are at most four such pairs on each path.  To show that each path has no more weight than its number of vertices, we show that in each of the bad cases, the path has enough neutral vertices (vertices do not receive weights) and/or PE-vertices.

We define optimal path covers and study their properties in Section~\ref{optimal}. The special vertices (heavy, light, PE-vertices) and their properties are studied in Section~\ref{structure}.  Then in Section~\ref{counting}, we prove the main lemma that the total weight on each path does not exceed its order and finish the proof of the theorem.   

\section{Optimal path covers and their basic properties}\label{optimal}

Let $G$ be a minimum counterexample to Theorem~\ref{main}.  Among all path covers of $G$, choose $\P$ to be an optimal path cover subject to the following:
\begin{enumerate}[(i)]
\item the number of paths is minimized.
\item subject to (i), the number of $1$-paths is minimized.
\item subject to (i)-(ii), the number of $3$-paths and cyclic paths is minimized.  
\item subject to (i)-(iii), the number of bad endpoints is minimized, where an endpoint $x'\in P'\in \P$ is bad if
(v1) $x'$ is adjacent to $u_1, u_4\in P$ and $xu_3\in E(G)$, or
(v2) $x'$ is adjacent to $u_1, u_5\in P$ and $xu_4\in E(G)$, where $P=xu_1u_2u_3u_4\ldots u_ky\in \P-\{P'\}$.
\item subject to (i)-(iv), the number of annoying endpoints is minimized, where an endpoint $x'\in P'=x'u'_1\ldots u'_ly'$ is annoying if $x'u'_{s+1}, x'u_i, xu'_{s-1}, u'_su_{i+1}\in E(G)$ with $P=xu_1u_2\ldots u_ky\in \P-\{P'\}$ and $2\le s\le l-1$.
\item subject to (i)-(v), the number of weighty vertices is minimized.
\item subject to (i)-(vi), for each non-cyclic path $P$, the number of vertices on $P$ between the endpoints and their corresponding furthest neighbors on $P$ is maximized.
\end{enumerate}

We shall call a path cover satisfying the first $t$ conditions above as $\P_t$. Thus $\P$ is $\P_7$, and $\P_0$ is just a path cover to $G$.  Clearly, $\P_{i+1}\subseteq \P_i$, so $\P_{i+1}$ has all the properties that $\P_i$ has.

A {\em net} is a triangle whose three neighbors not on the triangle are distinct.  The following was observed in ~\cite{R96}.

\begin{lem}\label{net}
The graph $G$ contains no net.
\end{lem}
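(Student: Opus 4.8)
The plan is to argue by contradiction using the minimality of $G$, via a standard net-contraction. Suppose $G$ contains a net: a triangle on $\{a,b,c\}$ whose respective outside neighbors $a',b',c'$ are distinct (each of $a,b,c$ has exactly one neighbor off the triangle, since $G$ is $3$-regular). Form $G'$ by deleting $a,b,c$ and adding a single new vertex $w$ joined to $a',b',c'$. Since $a',b',c'$ are distinct and lie outside the triangle, $G'$ is a simple $3$-regular graph on $n-2$ vertices.

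The heart of the argument is to show that $G'$ is again $2$-connected, so that the minimality of $G$ applies to it. First, $G'$ is connected: every component of $G-\{a,b,c\}$ must meet $\{a',b',c'\}$ (otherwise it would already be disconnected from the triangle in $G$), and $w$ rejoins all such components. To rule out a cut vertex $v$ of $G'$, I treat $v=w$ and $v\neq w$ separately. If $v=w$, then $G-\{a,b,c\}$ is disconnected; but the only triangle-to-outside edges are $aa'$, $bb'$, $cc'$, so any splitting of $\{a',b',c'\}$ among two or three components would leave a single triangle vertex as the unique gateway to some component, making that vertex a cut vertex of $G$ and contradicting $2$-connectivity. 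Hence $w$ is not a cut vertex. If $v\neq w$, then $v$ is an original vertex and $G-v$ is connected; any path in $G-v$ that traverses the triangle can be rerouted through $w$ in $G'-v$, because the entry and exit neighbors it uses lie in $\{a',b',c'\}\setminus\{v\}$, so the corresponding edges at $w$ survive. Thus $G'-v$ is connected, and $G'$ is $2$-connected.

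Since the smallest counterexample has $n\ge 12$ (by Jackson's theorem a $2$-connected cubic graph on at most $10$ vertices already has a hamiltonian path, hence path cover number $1\le n/10$), the graph $G'$ has $n-2\ge 10$ vertices and is not a counterexample, so $p(G')\le (n-2)/10$. Finally I lift an optimal path cover of $G'$ to one of $G$ without increasing the number of paths, by expanding $w$ back into the triangle: if $w$ is internal on its path with path-neighbors $a',b'$, replace the segment $a'wb'$ by $a'acbb'$; if $w$ is an endpoint with path-neighbor $a'$, replace it by $bcaa'\cdots$; and if $w$ is a one-vertex path, replace it by the $3$-path $abc$. In every case all of $a,b,c$ are covered and the number of paths is unchanged, giving $p(G)\le p(G')\le (n-2)/10 < n/10$, contradicting that $G$ is a counterexample.

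The main obstacle is the $2$-connectivity of $G'$; the lifting and the bookkeeping are routine. The delicate point is the case $v=w$, where one must use precisely the defining feature of a net (the three outside neighbors are distinct) together with the $2$-connectivity of $G$ to exclude any separation of $G-\{a,b,c\}$.
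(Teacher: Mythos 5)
Your proof is correct and takes essentially the same approach as the paper: contract the net to a single vertex, invoke the minimality of $G$ on the resulting smaller $2$-connected cubic graph $G'$, and lift the optimal path cover of $G'$ back to $G$ by re-expanding the new vertex into a path through the triangle. You carefully supply the details the paper leaves implicit (the $2$-connectivity of $G'$, the bound $n\ge 12$ via Jackson's theorem so that $G'$ still has at least $10$ vertices, and the explicit case analysis for the lifting), but the underlying argument is identical.
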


\begin{proof}
For otherwise, let $u_1u_2u_3$ be a triangle with $u_iu_i'\in E(G)$ such that $u_i'$'s are distinct. Then we contract the triangle to a single vertex $u$ and get a graph $G'$.  Now $G'$ has a path cover with at most $|V(G')|/10$ paths, but then we can get a path cover of $G$ by replacing $u$ with a path containing $u_1, u_2, u_3$.
\end{proof}

\begin{lem}\label{basic-facts}The following are true about $\P_1$:
\begin{enumerate}[(1)]
\item Endpoints of different paths in $\P_1$ are not adjacent. In particular, there is no edge between cyclic paths or between a cyclic path and an endpoint of a non-cyclic path.
\item every cyclic path has at least two neighbors not on the path. 
\end{enumerate}
\end{lem}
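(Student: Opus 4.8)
The plan is to derive both statements from the minimality of $\P_1$ under condition~(i), together with the defining feature of a cyclic path: its vertex set carries a spanning cycle. The single tool I will use repeatedly is \emph{re-rooting}: if $P$ is cyclic with $S=V(P)$, then for any $u\in S$ I can traverse the spanning cycle of $G[S]$ starting at $u$ to rewrite $P$ as a path with endpoint $u$, without changing $V(P)$ or the number of paths. Paired with this is \emph{merging}: whenever two distinct paths have respective endpoints joined by an edge of $G$, concatenating them along that edge produces a single path on the union of the two vertex sets, strictly reducing the number of paths and contradicting~(i).

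For part (1), suppose $x\in P$ and $y\in Q$ are endpoints of two distinct paths in $\P_1$ with $xy\in E(G)$. Merging $P$ and $Q$ along $xy$ contradicts~(i), which proves the main assertion. The two special cases then follow by first re-rooting. If $P$ and $Q$ are both cyclic and $uv\in E(G)$ with $u\in V(P)$ and $v\in V(Q)$, I re-root $P$ at $u$ and $Q$ at $v$ so that $u,v$ become endpoints, and then merge. If $P$ is cyclic, $y$ is an endpoint of a non-cyclic path $Q$, and $uy\in E(G)$ with $u\in V(P)$, I re-root $P$ at $u$ and merge $P$ with $Q$ along $uy$. Each case contradicts~(i).

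For part (2), let $P$ be a cyclic path with $S=V(P)$, and let $N$ be the set of vertices outside $S$ adjacent to $S$; I must show $|N|\ge 2$. If $N=\emptyset$ then $S$ is a component, so $S=V(G)$ by connectivity, and the spanning cycle of $G[S]=G$ is a Hamiltonian cycle, forcing $p(G)=1\le n/10$ (as $n\ge 10$) and contradicting that $G$ is a counterexample; hence $|N|\ge 1$. Now suppose $N=\{w\}$, so that every edge leaving $S$ goes to $w$. If $V(G)\setminus(S\cup\{w\})\neq\emptyset$, then deleting $w$ separates $S$ from the rest, making $w$ a cut vertex and contradicting $2$-connectivity. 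If instead $V(G)=S\cup\{w\}$, then the path of $\P_1$ covering $w$ is the single vertex $\{w\}$; choosing a neighbor $u\in S$ of $w$, re-rooting $P$ at $u$, and appending $w$ along $uw$ merges $\{w\}$ into $P$ and contradicts~(i). Thus $|N|\ge 2$.

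The main obstacle I anticipate is precisely the degenerate subcase $V(G)=S\cup\{w\}$ in part (2): this is exactly where $2$-connectivity provides no leverage, since there is no ``other side'' for $w$ to separate, so it must be dispatched by the re-rooting and merging argument rather than by a connectivity contradiction. Everything else reduces to a direct invocation of condition~(i).
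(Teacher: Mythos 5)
Your proof is correct and follows essentially the same route as the paper, which disposes of (1) by the minimality of the number of paths and of (2) by the absence of cut-vertices; your re-rooting/merging argument is just the explicit form of that one-line justification. The only addition is your careful handling of the degenerate cases ($S=V(G)$ and $V(G)=S\cup\{w\}$) in part (2), which the paper leaves implicit but which your argument dispatches correctly.
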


\begin{proof}
(1) is true because our cover used the minimum number of paths. (2) is true because $G$ has no cut-vertices.  
\end{proof}

The following lemma from~\cite{MM09} says that a path cover subject to (i) and (ii) contains no $1$-paths.  We give an alternative proof here, whose idea will be used to prove more results about path covers.   

\begin{lem}[\cite{MM09}]\label{mmlem}
The path cover $\P_2$ contains no $1$-paths.
\end{lem}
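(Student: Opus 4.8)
The plan is to argue by contradiction. Suppose $\P_2$ contains a $1$-path $\{v\}$; since $G$ is $3$-regular, write $N(v)=\{a,b,c\}$. First I would record that none of $a,b,c$ is an endpoint of another path: if, say, $a$ were an endpoint of a path $Q$, then extending $Q$ along the edge $va$ replaces the two paths $Q$ and $\{v\}$ by a single path, contradicting the minimality of the number of paths in (i). Hence each of $a,b,c$ is an \emph{internal} vertex of some path in $\P_2$.

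The main tool is a split-and-absorb operation. Let $a$ be internal on $P=xPa^-aa^+Py$ with $v\sim a$. Replacing the two paths $P$ and $\{v\}$ by $xPav$ and $a^+Py$ keeps the number of paths unchanged and destroys the $1$-path $\{v\}$; the only danger is that the residual segment $a^+Py$ is itself a new $1$-path, which happens precisely when $a^+=y$, i.e.\ when $a$ is adjacent to the endpoint $y$ of $P$. Performing the symmetric operation on the $a^-$ side instead creates a new $1$-path exactly when $a^-=x$. Thus one of the two operations produces a cover with the same number of paths but strictly fewer $1$-paths---contradicting (ii)---unless $a$ is adjacent to \emph{both} endpoints of $P$, that is, unless $P$ is a $3$-path and $a$ is its center. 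Therefore I may assume that each of $a,b,c$ is the center of a $3$-path; write these (necessarily distinct, vertex-disjoint) paths as $x_aay_a$, $x_bby_b$, $x_cc y_c$, so that $N(a)=\{v,x_a,y_a\}$ and similarly for $b,c$.

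It remains to rule out this rigid configuration, and this is the step I expect to be the main obstacle. The same split-and-absorb operation applied through the center $a$ now \emph{relocates} the $1$-path: it turns $\{v\},x_aay_a$ into $x_aav$ together with a new $1$-path $\{y_a\}$, yielding a cover with the same number of paths and the same number of $1$-paths, hence again a valid $\P_2$. I would therefore iterate, letting the $1$-path walk along such centers, and try to reach a position where it meets either an endpoint of another path or an internal vertex that is not the center of a $3$-path---either of which closes the argument by the reductions above. To force such a position I would exploit the constraints already available for $\P_2$: endpoints of distinct paths are nonadjacent (Lemma~\ref{basic-facts}), and $G$ contains no net (Lemma~\ref{net}).

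These constraints are strong. The six endpoints $x_a,y_a,\dots,y_c$ are pairwise nonadjacent except possibly within a common path, and none of them is adjacent to $v,a,b,c$ beyond its center edge, so each must send its two remaining edges outside the ten vertices $\{v,a,b,c,x_a,y_a,x_b,y_b,x_c,y_c\}$. When $n=10$ this leaves these endpoints with no room for their remaining edges (outside the triangle cases below), an immediate contradiction; in general I expect $2$-connectivity to prevent the $1$-path from being permanently trapped among centers of $3$-paths, since tracing the edges leaving these endpoints should eventually produce an edge to an endpoint of another path and hence a merge that lowers the number of paths, contradicting (i). The real work lies in handling the auxiliary triangle cases, namely when two endpoints of a common $3$-path are adjacent; there Lemma~\ref{net} forces them to share an external neighbor, which must be analyzed carefully to either complete a longer path or expose another forbidden adjacency.
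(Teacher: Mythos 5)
Your reduction is sound up to the point where every neighbor of the $1$-path $v$ is forced to be the middle vertex of a $3$-path: the observation that $a,b,c$ cannot be endpoints (else condition (i) fails) and the split-and-absorb step showing that a neighbor lying in the interior of a path of order at least $4$ lets you kill the $1$-path without increasing the number of paths (contradicting (ii)) are exactly the paper's first two steps. The genuine gap is the final step, which you explicitly leave open. Relocating the $1$-path from $v$ to $y_a$ does produce another valid $\P_2$, but in that new cover the same reduction forces all three neighbors of $y_a$ to again be centers of $3$-paths of the new cover (one of them being $a$, now the center of $x_aav$), and nothing prevents this walk from wandering forever among such configurations; ``iterate until the $1$-path escapes'' has no termination guarantee, and $2$-connectivity does not obviously supply one. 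Your $n=10$ count and the discussion of the triangle subcases via Lemma~\ref{net} do not close the argument for general $n$.

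The paper closes it with a finite double-count that your proposal is missing. Having observed (as your relocation step essentially shows) that the endpoints $x_a,y_a$ of each such $3$-path must themselves be adjacent only to midpoints of $3$-paths, it builds an auxiliary digraph on the set of $1$-paths and $3$-paths involved in these rearrangements, with an arc from $P_1$ to $P_2$ whenever an endpoint of $P_1$ is adjacent to the midpoint of $P_2$. The midpoint of a $3$-path has exactly one edge leaving its path, so every in-degree is at most $1$; a $1$-path sends out $3$ arcs and a $3$-path at least $3$ (two from each endpoint), so every out-degree is at least $3$. Comparing the sums of in- and out-degrees gives the contradiction. Some such global counting of the off-path edges is what your walking argument needs in order to become a proof; as written, the configuration you call ``rigid'' has not been ruled out.
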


\begin{proof}
Suppose that $P\in \P_2$ consists of vertex $v$.  By Lemma~\ref{basic-facts}, $v$ is not adjacent to an endpoint of another path. We also note that $v$ is not adjacent to an interior vertex on a path $P'$ of order at least $4$, for otherwise, one can easily decompose $P\cup P'$ into two paths, each of order at least $2$. Therefore $v$ must be adjacent to the midpoints of $3$-paths.  Furthermore, if $v$ is adjacent to the vertex $w\in P'=xwy$, then we may rearrange paths to form the paths $xwv$ and $y$ or $vwy$ and $x$. This implies $x$ and $y$ must also be adjacent only to the midpoints of 3-paths.

Let $T$ be the set of $1$-paths and $3$-paths that are  involved in the above rearrangement process.  We consider an auxiliary digraph $D$ whose vertices are the paths in $T$, and there is a directed edge from $P_1\in T$ to $P_2\in T$ if and only if an endpoint of $P_1$ is adjacent to the midpoint of $P_2$.  Clearly, each vertex in $D$ has in-degree at most $1$ and out-degree at least $3$, which is impossible.  Therefore, $\P$ contains no $1$-paths.
\end{proof}

\begin{lem}\label{no-triangle}
The path cover $\P_3$ contains no $1$-paths, $3$-paths, or cyclic paths. 
\end{lem}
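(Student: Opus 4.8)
The plan is to prove Lemma~\ref{no-triangle} by showing that the additional minimization condition (iii) forces $\P_3$ to avoid $1$-paths, $3$-paths, and cyclic paths simultaneously. Since $\P_3 \subseteq \P_2$, we already know from Lemma~\ref{mmlem} that $\P_3$ contains no $1$-paths, so it remains to rule out $3$-paths and cyclic paths. The natural strategy is by contradiction: assume such a path exists in $\P_3$, and produce a new path cover with the same number of paths, the same number of $1$-paths (namely zero), but strictly fewer $3$-paths and cyclic paths combined, contradicting the optimality of $\P_3$.

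First I would handle cyclic paths. Let $C$ be a cyclic path in $\P_3$. By Lemma~\ref{basic-facts}(2), $C$ has at least two neighbors outside the cycle; take an edge $vw$ where $v$ is on the cycle $C$ and $w$ lies on some other path $P'$. Since $G[V(C)]$ has a spanning cycle, I can break the cycle at $v$ to form a Hamiltonian path of $V(C)$ with $v$ as an endpoint, then attach it via edge $vw$ to $P'$. The key point to verify is that this operation does not create a new $1$-path, and that it either reduces the number of paths (if $w$ is an endpoint of $P'$) or at least reduces the count of cyclic/$3$-paths without increasing it. If $w$ is interior to $P'$, I would need to check that splicing $C$ in at $w$ merges $C$ into a longer non-cyclic path, eliminating one cyclic path and creating paths of order at least $2$, so that condition (iii) strictly improves. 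The Lemma~\ref{basic-facts}(1) guarantee that endpoints of distinct paths are nonadjacent helps control which edges $vw$ can look like.

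For $3$-paths, the argument is more delicate and is where I expect the main obstacle to lie. Let $P = xwy$ be a $3$-path in $\P_3$. Following the idea of the proof of Lemma~\ref{mmlem}, I would examine the neighbors of $x$, $w$, and $y$ outside $P$. The earlier lemmas tell me that $x, y$ cannot be adjacent to endpoints of other paths, and that $w$ is the midpoint, so the constraints on where edges can go are tight. Using Lemma~\ref{net}, the three vertices of the triangle $xwy$ (if $xy \in E(G)$, making $P$ cyclic — but as a $3$-path we treat the non-cyclic case) cannot form a net, which restricts how their outside-neighbors can be distributed. The hard part will be to show that one can always reroute to absorb the $3$-path into an adjacent longer path or merge two short structures, strictly decreasing the quantity in (iii); I would likely build an auxiliary digraph on the offending short paths as in Lemma~\ref{mmlem}, where each such path has out-degree forced to be large (its endpoints must attach somewhere) but in-degree bounded, deriving the same degree-counting contradiction. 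The subtlety is ensuring the rerouting never introduces a fresh $1$-path or cyclic path, so that all of conditions (i), (ii), and the $3$-path/cyclic count in (iii) move in the right direction; carefully tracking the parities and endpoints through each reroute is the crux of the argument.
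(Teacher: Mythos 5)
There is a genuine gap, and it sits exactly where you flag uncertainty. Your plan for cyclic paths --- break the cycle at $v$ and splice it into $P'$ at an interior vertex $w$ --- does not strictly improve condition (iii): when $w$ is interior, $P'-w$ splits into two components $Q_1,Q_2$, only one of which gets absorbed into the new long path; the leftover component is a free-standing path that, by the very minimality of $\P_3$, must itself be a $1$-path, a $3$-path, or a cyclic path (otherwise the reroute would already contradict (i)--(iii)). So the operation trades one bad path for another and the count in (iii) need not decrease. The same phenomenon defeats any purely local argument for $3$-paths. This is why the paper does not treat cyclic paths and $3$-paths separately: it defines a single notion of \emph{bad} path (order $1$, order $3$, or cyclic), shows that every attachment vertex $w$ of a bad path splits its host $Q$ into two bad components, and only then derives a contradiction globally.

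Your instinct to use a degree-counting digraph as in Lemma~\ref{mmlem} is the right one, but the digraph you describe (on ``the offending short paths'' only) is too small. The paper's digraph lives on all paths of $\P$, its edges record where endpoints of bad components of order more than one attach to special vertices of \emph{other} paths, and the construction must be iterated (the families $\F_0,\F_1,\dots$) because bad components of $Q-w$ can themselves be rerouted and produce further bad components. The substantive work --- which your sketch leaves unresolved --- is (a) the in-degree bound: a non-bad path carries at most one special vertex unless it is a $5$-path, which can carry two; and (b) the out-degree lower bound of at least $3$, which requires showing via explicit reroutes that there are no edges joining the two bad components $Q_1,Q_2$ of a host path, combined with $2$-connectedness to guarantee each non-singleton component has an edge leaving it that misses the special vertex. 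Without these two counts the contradiction does not materialize, so the proposal as written does not yet constitute a proof.
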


\begin{proof}
We call a path {\em bad} if it is cyclic or has order $1$ or $3$.   By Lemma~\ref{mmlem}, we may assume that each bad path in $\P_3$ is a non-cyclic 3-path (note that a cyclic 3-path must be a net) or a cyclic path with order at least $4$.

Let $P$ be a bad path in $\P_3$ and $x\in P$ be a potential endpoint of $P$, which is an endpoint if $P$ is non-cyclic, or any vertex on $P$ if $P$ is cyclic.  Suppose that $xw\in E(G)$ with $w\in Q\in \P_3-\{P\}$.  Then $Q$ is not cyclic, and $Q-w$ splits into two paths $Q_1$ and $Q_2$.    In fact, the path $P'$ obtained by concatenating $P, w, Q_i$ cannot be cyclic (as then $P+Q$ would have a Hamilton path contradicting minimality) or have length less than $4$. Thus both $Q_1$ and $Q_2$ must be bad by the minimality of the cover. Furthermore, neither $Q_1$ nor $Q_2$ is a cyclic 3-path, or we would have a net or a path cover with fewer paths.    We shall call $w$ a {\em special vertex} on $Q$, and $Q_1, Q_2$ {\em bad components} on $Q$.

Now, for $i\in \{1,2\}$, provided $Q_i$ has order more than one,  replacing $P,Q$ in $\P$ with $Q_i$ and $P'=P+w+Q_{3-i}$ gives a new minimal path cover. We can repeat our argument using $Q_i$ in the place of $P$ and any other non-bad path of the new cover other than $P'$ in the place of $Q$.

We build a directed graph whose vertices are the paths in $\P$, and a family $\F$ of subpaths of these paths as follows.
\begin{itemize}
\item[(A)] The set $\F_0$ consists of bad paths in $\P_3$; and we add a directed edge from $P\in \F_0$ to $Q\in \F_0$ if a potential endpoint of $P$ is adjacent to a special vertex on $Q$ (note that this can only happen if $Q$ is a non-cyclic $3$-path);

\item[(B)] If an endpoint of a Hamilton path on the vertex set of a bad path $P\in \F_0$ is adjacent to $w$ on a non-bad path $Q\in \P$, we add to $\F_1$ all the bad components of $Q-w$ which do not have order $1$, and add to our digraph an edge from $P$ to $Q$;

\item[(C)] For $i\ge 1$,  if an endpoint of a Hamilton path on the vertex set of some bad path $P\in \F_i$ is adjacent to a special vertex $w$ of some non-bad path $R\in \P_3$, we add to $\F_{i+1}$ all the components of $R-w$ which do not have order $1$,  and we add to the digraph the edge from the path in $\P_3$ that contains $P$ as a bad component described in (B) to $R$.  Note that multi-edges are allowed, but we only allow one directed edge implied by the middle vertex of each 5-path.
\end{itemize}

We let $\F$ be the union of the $\F_i$.  By definition, the in-degree of a path equals to the number of special vertices on it.  Note that a cyclic bad path or component does not contain special vertices.  It follows that if a non-bad path $P$ contains two special vertices $w_1$ and $w_2$, then the bad component in $P-w_1$ that contains $w_2$ must be a $3$-path, and the bad component in $P-w_2$ that contains $w_1$ must also be a $3$-path, so $P$ must be a non-cyclic $5$-path.  Therefore, the in-degrees of $5$-paths are at most $2$ and all other paths are at most $1$. Note that there may be isolated vertices in the digraph.

Now we count the out-degrees. The out-degree of a path $P$ equals to the number of edges that connect one endpoint of a bad component of order more than $1$ and a special vertex not on $P$.  Let $Q_1, Q_2$ be the two bad components of a path $P\in \P_3$ in the digraph.

If $Q_1$ and $Q_2$ both have order $1$, then $P$ is a bad $3$-path.  By (A), $P$ has out-degree $4$.  So let $Q_1$ have order more than $1$.  Note that $Q_1$ has at least two edges out of $Q_1$ (as $G$ is $2$-connected), one of which is not adjacent to the special vertex on $P$.

If $Q_1$ and $Q_2$ are both cyclic or have order $1$, then there can be no edge between them, as if $Q'=Q_1\cup Q_2$ has a Hamiltonian cycle, we can rearrange $P\cup Q$ into one path, contradicting the minimality of the cover, and otherwise $P'=P+w$ and $Q'$ are both non-cyclic and  we contradict the minimality of the number of bad paths in the cover.    So $P$ has out-degree at least $1$ (actually $2$ if both $Q_1, Q_2$ are cyclic).

Now, if $Q_1$ has order three and $Q_2$ is cyclic or has order $1$, then (a) the endpoint of $Q_1$ which is an endpoint of $Q$ cannot be adjacent to any vertex on $Q_2$ or we could find a Hamilton path on $P\cup Q$ contradicting the minimality of the cover, and (b) the other endpoint $x'$ of $Q_1$ can be adjacent to none of the vertices on $Q_2$ or we could find a Hamiltonian path $P'$ on $P\cup Q_2+w+x'$, which together with $Q′=Q_1-x'$ contradicts the minimality of the cover.   Similar arguments show that if $Q_1$ and $Q_2$ both have order three then there are no edges joining their endpoints.  So $P$ has out-degree at least $3$.

Since the out-degrees of the paths are as large as their in-degrees, and the bad paths have higher out-degrees than their in-degrees, such a digraph does not exist, a contradiction.
\end{proof}

From now on, we assume that $\P_3$ consists of non-cyclic paths with order other than $1$ and $3$.

\begin{lem}\label{no-7}
There are no bad endpoints described in (iv) in $\P_4$.
\end{lem}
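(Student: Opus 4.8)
The plan is to argue by contradiction against the minimality in condition~(iv). Suppose $\P_4$ contains a bad endpoint $x'\in P'$, witnessed by a second path $P=xu_1u_2\cdots u_ky$ and the configuration (v1) (so $x'u_1,x'u_4,xu_3\in E(G)$) or (v2) (so $x'u_1,x'u_5,xu_4\in E(G)$). Writing $P'=x'b\cdots y'$, I would build a new path cover $\P''$ that agrees with $\P_4$ on all paths other than $P,P'$, replaces $P\cup P'$ by exactly two paths, keeps the number of paths and of $1$-/$3$-/cyclic paths unchanged, and turns $x'$ into an internal vertex. Since $\P''$ would then satisfy (i)--(iii) with the same values while having one fewer bad endpoint (the former bad endpoint $x'$ is no longer an endpoint), it would contradict the choice of $\P_4$ at step~(iv).

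For (v1) the recombination I would use is
\[
Q_1=x\,u_3\,u_4\cdots u_k\,y,\qquad Q_2=y'\cdots b\,x'\,u_1\,u_2,
\]
where $Q_1$ follows the chord $xu_3$ and then the tail of $P$, and $Q_2$ runs along $P'$ to $x'$, crosses the chord $x'u_1$, and ends at $u_2$. Together they cover $V(P)\cup V(P')$ by two paths, with $x'$ now internal to $Q_2$. Since $P$ contains $u_4$, we have $k\ge 4$, so $|V(Q_1)|=k\ge 4$ and $|V(Q_2)|=|V(P')|+2\ge 4$; hence neither is a $1$- or $3$-path. For (v2) the symmetric choice
\[
Q_1=x\,u_4\,u_5\cdots u_k\,y,\qquad Q_2=y'\cdots b\,x'\,u_1\,u_2\,u_3
\]
works verbatim, now using the chord $xu_4$ and with $k\ge 5$, so $|V(Q_1)|=k-1\ge 4$ and $|V(Q_2)|=|V(P')|+3\ge 5$. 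Conveniently, the length bookkeeping leaves no awkward boundary case.

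The only way $\P''$ could fail (i)--(iii) would be for $Q_1$ or $Q_2$ to be cyclic. I would rule this out as follows. The endpoints $x,y$ of $Q_1$ are the endpoints of $P$, which is non-cyclic by Lemma~\ref{no-triangle}, so $xy\notin E(G)$; likewise $y'$ is an endpoint of $P'$. If $G[V(Q_1)]$ (or $G[V(Q_2)]$) carried a spanning cycle, then, using an edge from that vertex set to the remainder of $P\cup P'$ guaranteed by $2$-connectivity, one could either merge $P\cup P'$ into a single path or re-split it into two paths with strictly fewer cyclic/$3$-paths, contradicting (i) or (iii). Thus both $Q_1$ and $Q_2$ are non-cyclic and $\P''$ ties $\P_4$ on (i)--(iii).

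The heart of the argument, and the step I expect to be the main obstacle, is the bad-endpoint accounting: one must show $\P''$ has strictly fewer bad endpoints than $\P_4$. Since $\P''$ differs from $\P_4$ only by replacing $P,P'$ with $Q_1,Q_2$, the only endpoints whose status can change are the new endpoint $u_2$ (resp.\ $u_3$ in (v2)), the re-contextualised endpoints $x,y,y'$, and any endpoint $z$ of an unchanged path whose defining neighbours lie in $V(P)\cup V(P')$ and have shifted position. The net gain comes from $x'$, which is no longer an endpoint; I would show no compensating new bad endpoint appears. The cubic constraint is essential here: each of $x,x',y,y'$ has only one off-path neighbour beyond those already named, which sharply limits how many of the position-sensitive adjacencies in (v1)/(v2) can be realised at any single endpoint. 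Combined with Lemma~\ref{basic-facts}(1) (endpoints of distinct paths are non-adjacent, so $u_2$ cannot be matched against the endpoints of other paths), this should let one verify, in each length regime of $P$ and $P'$ and for both patterns, that $u_2$ (resp.\ $u_3$) is not bad and that the repositioning along $Q_1,Q_2$ creates no new bad endpoint. Confirming in particular that the new endpoint does not itself fall into configuration (v1) or (v2) against $Q_1$ is the delicate part, and is where I expect most of the casework to lie.
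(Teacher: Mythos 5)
Your high-level strategy is the paper's: reroute $P\cup P'$ into two new paths so that $x'$ becomes internal, check that (i)--(iii) are preserved, and contradict the minimality in (iv). But the entire substance of the paper's proof is the step you explicitly leave undone --- verifying that the recombination creates no \emph{new} bad endpoint --- and for your particular choice of recombination this is not a formality: there are concrete configurations in which new bad endpoints do appear and must be killed by further ad hoc reroutings. For example, in case (v1) with your $Q_1=xu_3u_4\cdots u_ky$ and $Q_2=y'\cdots x'u_1u_2$: if the third neighbours of $x$ and $u_2$ happen to be $u_5$ and $u_6$ respectively, then $u_2$ is adjacent to the first and fourth vertices of $Q_1$ (namely $u_3$ and $u_6$) while $xu_5\in E(G)$ is the required chord, so $u_2$ is a new (v1)-bad endpoint; one must then observe that $y'P'x'u_4u_3u_2u_1xu_5Py$ is a single path covering $V(P)\cup V(P')$, contradicting (i). Similarly, if $xu_2',u_2u_1'\in E(G)$ then $x$ becomes a bad endpoint against $Q_2$. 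Each such threat needs its own explicit rerouting, and your proposal neither enumerates these cases nor supplies the reroutings; ``this should let one verify'' is precisely the part that cannot be taken on faith here. (The degree-$3$ constraint limits the number of cases, as you say, but it does not make them vacuous.)

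Two further points. First, your non-cyclicity argument is also incomplete: if $G[V(Q_1)]$ had a spanning cycle you propose to ``merge or re-split'' using an edge to $V(Q_2)$, but the available edge $xu_1$ lands on an \emph{interior} vertex of $Q_2$, and a Hamiltonian path of $V(Q_2)$ starting at $u_1$ need not exist, so the merge is not automatic. Second, the paper actually uses a different recombination (for (v1) it forms $y'P'x'u_1x$ and $u_2Py$, and for (v2) it splits into two subcases depending on whether $u_5Py$ is a $3$-path), and then devotes the whole proof to exactly the bad-endpoint bookkeeping you defer. Your decomposition may well be repairable, but as written the proof of the key claim is missing.
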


\begin{proof}
Suppose otherwise.  Consider an endpoint $x'\in P'=x'u_1'u_2'\ldots u_t'y'$ in (v1).  We replace $P,P'$ with $P'x'u_1x$ and $u_2Py$.   We lose $x'$, and do not create cyclic paths or we would contradict the minimality of the cover.  We do not gain a new bad endpoint described in (v1) because (a) $x$ is not adjacent to $u_1'$ or we could rearrange $P'+P$ into one path $P'u_1'xu_3u_2u_1x'u_4Py$, and (b) $u_2$ is not adjacent to $u_5$ or we could rearrange $P'+P$ into one path $P'x'u_4u_3xu_1u_2u_5Py$.  We also do not gain a new bad endpoint described in (v2) because (a) $x$ cannot be, or $u_2u_6, xu_7\in E(G)$, which allows us to reroute $P,P'$ into one path $P'x'u_1u_2u_6u_5u_4u_3xu_7Py$, and (b) $u_2$ cannot be, or $u_2u_3', xu_2'\in E(G)$, which allows us to reroute $P, P'$ into one path  $P'u_3'u_2u_1x'u_1'u_2'xu_3u_4Py$.

Consider an endpoint $x'\in P'$ in (v2).  If $u_5Py$ is a $3$-path, then we replace $P,P'$ with $u_6y$ and $xPu_5x'P'$.  We lose $x'$, and do not create cyclic paths, but clearly do not gain a new bad endpoint, as $u_1$ is now adjacent to a vertex on the path, and $u_6$ has at most one neighbor on other paths.  If $u_5Py$ is not a $3$-path, then we replace $P,P'$ with $u_5Py$ and $u_2u_3u_4xu_1x'P'$.  We lose $x'$, and do not create cyclic paths.  We do not gain new bad endpoints, since $u_2, u_5$ cannot be as they have at most one neighbor on other paths, and no other vertex can be adjacent to $x'$ as it is already adjacent to $u_5, u_1, u_1'$, and no vertex from other path can be a bad endpoint (to $u_5Py$) as $u_5$ has only one neighbor on the path. 
\end{proof}

\begin{lem}\label{annoying}
There are no annoying endpoints described in (v) in $\P_5$.
\end{lem}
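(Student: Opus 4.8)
The plan is to argue by contradiction. Suppose some endpoint $x'$ of a path $P'=x'u'_1\cdots u'_ly'\in\P_5$ is annoying, witnessed by $P=xu_1\cdots u_ky\in\P_5-\{P'\}$ and indices $s,i$ (with $2\le s\le l-1$) so that $x'u'_{s+1},x'u_i,xu'_{s-1},u'_su_{i+1}\in E(G)$. As $P\ne P'$ their vertex sets are disjoint, and the range of $s$ guarantees that $u'_{s-1},u'_s,u'_{s+1}$ all exist. I would replace $P$ and $P'$ by the two paths
\[ E=u'_1\cdots u'_{s-1}\,x\,u_1\cdots u_i\,x'\,u'_{s+1}\cdots u'_ly',\qquad F=u'_s\,u_{i+1}\cdots u_ky, \]
which use exactly the four edges above together with subpaths of $P$ and $P'$. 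A vertex count gives $V(E)\cup V(F)=V(P)\cup V(P')$, so we obtain a new path cover with the same number of paths in which $x'$ is now an interior vertex of $E$, hence no longer an annoying endpoint.

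First I would confirm the two new paths are admissible for (i)--(iii). The vertex $u'_s$ has its three neighbours among $u'_{s-1},u'_{s+1},u_{i+1}$, only the last of which lies in $V(F)$; so $u'_s$ has degree one in $G[V(F)]$ and $F$ is not cyclic. If $E$ were cyclic, I would open its spanning cycle at $u'_{s-1}$ and use the edge $u'_{s-1}u'_s$ to splice on $F$ (whose endpoint is $u'_s$), producing a single path on $V(P)\cup V(P')$ and contradicting the minimality of $|\P|$. Neither new path is a $1$-path and $E$ has order at least $4$, so the only forbidden short path that can arise is a $3$-path $F=u'_su_ky$, occurring exactly when $i=k-1$; this case I would settle by a separate rerouting, exactly as in the two-case split in the proof of Lemma~\ref{no-7}.

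Next I would show no bad endpoint is created, so (iv) still holds with value $0$. Only the two new endpoints $u'_1$ and $u'_s$ can have changed status, since $y,y'$ retain the same incident path-segments. The decisive point is that these vertices have very restricted outside adjacencies: $u'_1$ has neighbours $x',u'_2$ and one more, with $x'\in V(E)$, so $u'_1$ is adjacent to at most one vertex of any path other than $E$; and $u'_s$ is adjacent, outside $F$, only to $u'_{s-1},u'_{s+1}\in V(E)$. Since a bad endpoint (v1)--(v2) must be adjacent to two distinct vertices of a single other path, $u'_1$ cannot be bad; for $u'_s$ the only candidate witness is $E$, and the positions of $u'_{s-1},u'_{s+1}$ on $E$ do not match the patterns (v1)--(v2), so $u'_s$ is not bad either.

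The main obstacle is ruling out new annoying endpoints. By the same scarcity of outside edges, if $u'_1$ (respectively $u'_s$) were annoying then its required chord must be $u'_1x'$ (respectively one of $u'_su'_{s\pm1}$), and this pins down the remaining three edges of the annoying pattern; there are only finitely many resulting configurations, and in each I would produce a short further rerouting of $E,F$ and the at most one other path involved that merges two paths, creates a $1$- or $3$-path, or deletes a bad endpoint---contradicting one of (i)--(iv)---or else lowers the number of annoying endpoints outright. A symmetric check dispatches any endpoint of an unchanged path that sends an edge into $V(P)\cup V(P')$. Since every case yields a contradiction, the new cover contradicts the minimality in (v), and therefore $\P_5$ contains no annoying endpoint.
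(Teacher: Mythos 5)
Your overall strategy is the same as the paper's: replace $P,P'$ by two paths that use the four edges of the annoying configuration so that $x'$ becomes interior, then check that none of the lexicographic conditions (i)--(v) is violated. But your verification contains a concrete failure. Take $s=3$ and $i=1$. Then $E=u'_1u'_2xu_1x'u'_4\cdots y'$, so reading $E$ from its endpoint $u'_1$, the first five interior vertices are $u'_2,x,u_1,x',u'_4$. The new endpoint $u'_s=u'_3$ of $F$ is adjacent to $u'_2$ (position $1$) and $u'_4$ (position $5$), and the endpoint $u'_1$ of $E$ is adjacent to $x'$ (position $4$): this is exactly pattern (v2), so $u'_3$ is a bad endpoint of the new cover. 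Since (iv) precedes (v) in the lexicographic order, the new cover can no longer be compared favourably at level (v) and the contradiction evaporates. Your claim that ``the positions of $u'_{s-1},u'_{s+1}$ on $E$ do not match the patterns (v1)--(v2)'' is therefore false; the choice of routing matters. (The paper routes the long path as $u_iPxu'_{s-1}P'x'u'_{s+1}P'y'$, keeping $u_i$ rather than $u'_1$ as the endpoint; that puts $u'_{s-1},u'_{s+1}$ at positions $2$ and $5$ and avoids this trap, but its own problematic configuration, $s=3$ with $u'_1u_{i+2}\in E(G)$, must then be handled by a completely different replacement --- its Case 1.)

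Two further gaps. First, the case $i=k-1$, where $F$ is a $3$-path, is deferred to ``a separate rerouting as in Lemma~\ref{no-7}'' without being carried out. Second, the heart of the lemma --- showing that no \emph{new} annoying endpoint appears, for $u'_1$, for $y,y'$, and for endpoints of untouched paths with neighbours in $V(P)\cup V(P')$ --- is reduced to ``finitely many configurations, each resolvable by a short rerouting.'' That is precisely where the work lies: for instance, $u'_1$ becomes annoying in your cover exactly when $u'_1u_{i+2},\,yu_{i-1}\in E(G)$, and excluding this requires the paper's preliminary observation that $y$, $y'$ and the endpoints of all other paths have no neighbours on the cyclic part $x'P'u'_{s-1}xPu_i$ (else $|\P|$ could be reduced). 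You never establish that observation, and it is also what is needed to justify your unproved assertion that $y$ and $y'$ ``retain the same status.''
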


\begin{proof}
Let $x'\in P'=x'u'_1\ldots u_l'y'\in \P_5$ be an annoying endpoint. Then for $P=xu_1\ldots u_ky\in \P_5-\{P'\}$, $x'u_i, x'u'_{s+1}, xu'_{s-1}, u'_su_{i+1}\in E(G).$  Note that $P,P'$ can be decomposed into $yPu_{i+1}u_s'P'y'$ and cyclic path $x'P'u_{s-1}'xPu_i$. So $y, y'$, and endpoint of the paths in $\P_5-\{P,P'\}$ cannot have neighbors on the cyclic path. 

{\bf Case 1.} $s=3$ and $u_1'u_{i+2}\in E(G)$.  We replace $P, P'$ with $P_1=x'u_1'$ and $P_1'=yPxu_2'P'y'$.  Since $P_1$ is a $2$-path and $xy, xy'\not\in E(G)$,  $P_1, P_1'$ are not $1$-paths, $3$-paths, or cyclic paths, and none of the endpoints ($x', u_1', y, y'$) becomes bad or annoying.  But we have fewer annoying endpoints, a contradiction.  

{\bf Case 2.} $s>3$, or $s=3$ but $u_1'u_{i+2}\not\in E(G)$.  We replace $P,P'$ with $P_2=u_iPxu_{s-1}'P'x'u_{s+1}'P'y'$ and $P_2'=u_s'u_{i+1}u_{i+2}Py$.   Note that none of $P_2, P_2'$ can be $1$-paths, $3$-paths, or cyclic paths. Since $u_i$ has at most one neighbor on paths other than $P_2$, $u_i$ is not bad.  Since $u_{s-1}'$ is nto next to $u_i$ (the endpoint of $P_2$), $u_s'$ is not bad.  Since $u_s'$ has only one neighbor on $P_2'$, it is not annoying.  Note that $u_i$ is annoying only if $u_1'u_{i+2}\in E(G)$ and $s=3$ (so that $u_{s-1}'=u_2'$), so $u_i$ is not annoying.  Since $y, y'$ have no neighbors in $u_iP_2x'$, they cannot become new bad or annoying endpoints.  Therefore, we have fewer annoying endpoints, a contradiction.  
\end{proof}

\begin{lem}\label{no-two-consecutive}
Let $P=xu_1u_2\ldots u_ky\in \P_7$ be a non-cyclic path so that $xu_i,xu_j\in E(G)$ with $1<i<j\le k$.  Then $j\not=i+1$, and the neighbors of $u_{i-1}$ and $u_{j-1}$ are on $P$.  Furthermore, if $y$ has no neighbors on $xPu_j$, then the neighbors of $u_{j-1}$ and $u_{i-1}$ must be on $xPu_j$.
\end{lem}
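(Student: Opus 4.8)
The plan is to use the standard \emph{rotation} operation on $P$. Given a chord $xu_m$ (with $m\in\{i,j\}$), reversing the initial segment produces an equivalent path on the same vertex set,
\[
R_m \;=\; u_{m-1}u_{m-2}\cdots u_1\,x\,u_m u_{m+1}\cdots y,
\]
which keeps $y$ as one endpoint but promotes $u_{m-1}$ to be the other. Since $R_m$ covers exactly $V(P)$, replacing $P$ by $R_m$ changes neither the number of paths nor the number of $1$-paths, $3$-paths, or cyclic paths, so conditions (i)--(iii) are preserved automatically; all the work lies in tracking (iv)--(vii).

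For $j\neq i+1$: if $j=i+1$, then $xu_iu_{i+1}$ is a triangle whose three neighbors off the triangle are $u_1$, $u_{i-1}$, and the successor of $u_{i+1}$ on $P$. When $i\ge 3$ these three vertices are distinct, so the triangle is a net, contradicting Lemma~\ref{net}. When $i=2$ the set $\{x,u_1,u_2,u_3\}$ induces a $K_4^-$, and rerouting $P$ as $u_1xu_2u_3u_4\cdots y$ destroys the weightiness of $u_3$ (which is no longer adjacent to any endpoint) while creating at most the single new weighty vertex $u_2$; this strictly lowers the number of weighty vertices, contradicting (vi).

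For the neighbors of $u_{i-1}$ (and symmetrically $u_{j-1}$) lying on $P$, note first that $j\ge i+2$ by the previous paragraph, so $u_{i-1},u_i,u_{j-1},u_j$ are four distinct vertices and the two rotations do not interfere. Suppose the third neighbor $w$ of $u_{i-1}$ lies off $P$. After rotating to $R_i$, the endpoint $u_{i-1}$ is adjacent to $w$; if $w$ were an endpoint of another path we could merge the two paths, contradicting (i), so $w$ is interior to another path. But then in $R_i$ the vertex $u_j$, which was weighty only through its adjacency to $x$, is no longer adjacent to any endpoint, while the new endpoint $u_{i-1}$ creates only the weighty vertex $u_i$; the weighty count strictly decreases, contradicting (vi). The identical argument applied to $R_j$ handles $u_{j-1}$. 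For the \emph{Furthermore} clause, assume $y$ has no neighbor on $xPu_j$: this hypothesis is exactly what guarantees that $R_i$ and $R_j$ create no bad endpoint (iv), no annoying endpoint (v), and no additional weighty vertex through $y$, so both rotations tie the original cover on (i)--(vi) and we may invoke the maximality in (vii). In $R_m$ the endpoint $u_{m-1}$ is adjacent to $u_m$; if its third neighbor lay strictly beyond $u_j$ on $P$, that neighbor's distance along $R_m$ would exceed $j$, the distance from $x$ to $u_j$ in $P$, strictly increasing the quantity in (vii) and giving a contradiction. Combined with the neighbor being on $P$, the third neighbors of $u_{i-1}$ and $u_{j-1}$ must lie on $xPu_j$.

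The conceptual steps are short, so the real labor is the bookkeeping needed to certify that each rotation preserves conditions (i)--(vi) \emph{exactly}, and in particular that the relevant weighty count drops strictly rather than merely staying equal. This forces one to exclude the incidental edges (such as $yu_i$, $yu_j$, or $u_iu_{j-1}$) that could keep $u_i$ or $u_j$ weighty after rotation; controlling these interactions with the far endpoint $y$ is precisely what the \emph{Furthermore} hypothesis buys us, and it is where essentially all of the case analysis will live.
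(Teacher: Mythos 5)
Your proposal follows the paper's proof almost exactly: the net argument disposes of $j=i+1$ with $i\ge 3$, and rotations at the chords $xu_i,xu_j$ combined with the optimality conditions (vi) and (vii) handle everything else. Two remarks. First, the worry you flag at the end about incidental edges such as $yu_i$ or $yu_j$ is vacuous: by $3$-regularity $u_i$ and $u_j$ already have all three neighbors accounted for ($u_{i\pm1}$ and $x$, resp.\ $u_{j\pm1}$ and $x$), so after the rotation $R_i$ the vertex $u_j$ really does cease to be weighty, $y$'s contribution is unchanged, and the weighty count drops strictly; your main argument is therefore complete as written (modulo checking that no new bad or annoying endpoints arise under (iv)--(v), a verification the paper also omits). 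Second, there is a genuine slip in the $i=2$, $j=3$ subcase: you claim the rotation $u_1xu_2u_3\cdots y$ creates ``at most the single new weighty vertex $u_2$,'' but the third neighbor $z$ of $u_1$ may be some $u_t$ with $t\ge 4$ (it cannot be $u_3$, whose neighbors are already $u_2,u_4,x$), and then $u_t$ becomes weighty, so the weighty count need not strictly decrease and (vi) alone gives no contradiction. The paper patches exactly this by falling back on (vii): the furthest neighbor of the new endpoint $u_1$ is $u_t$ with $t\ge4$, strictly farther along the new path than $x$'s furthest neighbor $u_3$ was on $P$, while $y$'s side is unchanged. With that one repair your argument coincides with the paper's.
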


\begin{proof}
If $j=i+1$ and $i\not=2$, then $xu_iu_{i+1}$ is a net, a contradiction to Lemma~\ref{net}. If $j=i+1=3$, then $u_1$ will be a better endpoint than $x$ subject to (vi) (with fewer weighty vertices) or (vii) (weighty neighbors are further away from the endpoints).   Let $j>i+1$.  If $u_{i-1}$ (or $u_{j-1}$) has a neighbor outside of $P$, then we can reroute $P$ so that $u_{i-1}$ (or $u_{j-1}$) is an endpoint, which would give fewer weighty vertices,  a contradiction to the optimality of $\P$.   If $y$ has no neighbors on $xPu_j$, and $u_{j-1}$ (or $u_{i-1}$)  has a neighbor $u_t$ with $t>j$, but then $u_{j-1}$ (or $u_{i-1}$) is a better endpoint than $x$ subject to (vii) (and we do not change the number of vertices between $y$ and its furthest neighbor).
\end{proof}

\section{Properties of heavy, light, and PE-vertices}\label{structure}

In this section, we study the properties of some special vertices on the paths in $\P$.  

\begin{definition}
Let $u$ be an endpoint of a path $P\in \P_4$ and $uv\in E(G)-E(P)$.  Then $v$ is called a heavy vertex if $v\not\in V(P)$ (and a weighty vertex is $v\in V(P)$). 
\end{definition}

\begin{definition}
Let $uv$ be an edge between $u=u_i\in P=xu_1\ldots u_ky\in \P_4$ and $v\in P_v\in \P_4-\{P\}$. Then $u$ is called a PE-vertex (aka, pseudo-endpoint) and $v$ is called a light vertex if one of the following is true
\begin{enumerate}
\item[(1a)] $xu_{i+1},yu_{i-1}\in E(G)$; or
\item[(1b)] $xu_{i+1}\in E(G)$, and $u_{i-1}$ is heavy;  or $yu_{i-1}\in E(G)$ and $u_{i+1}$ is heavy; or
\item[(1c)] both $u_{i-1}$ and $u_{i+1}$ are heavy.
\end{enumerate}
A vertex is {\em neutral} if it is not heavy or light or weighty.
\end{definition}

 Note that a PE-vertex is also a neutral vertex. Also note that if $u$ is a PE-vertex defined in (1a) and (1b), then $P_u$ can be rerouted so that $u$ (and $x$ or $y$) is an endpoint of the path.

\begin{lem}\label{neutral-end}
Let $u\in P\in \P, v\in P_v\in \P-\{P\}$ with $uv\in E(G)$. If $P=xPy$ can be rerouted so that $u$ is an endpoint, then $v$ cannot be an endpoint or a PE-vertex, unless $xv^-, yv^+\in E(G)$. Consequently, if $u$ and $y$ are the endpoints, then $ux\in E(G)$ or $u$ is neutral. 
\end{lem}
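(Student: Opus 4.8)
The plan is to argue by contradiction from the optimality of $\P$, the main tool being a \emph{merge}: if I can produce a Hamilton path of $G[V(P_v)]$ with $v$ as one endpoint, then concatenating it to the given Hamilton path of $G[V(P)]$ ending at $u$, across the edge $uv$, yields a single path on $V(P)\cup V(P_v)$ and lowers $|\P|$ by one, contradicting condition (i). So the whole question is when such a Hamilton path of $V(P_v)$ ending at $v$ is available, and the exceptional edges $xv^-,yv^+$ should emerge precisely as the obstruction to it.

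First I would clear the easy cases. If $v$ is a genuine endpoint of $P_v$, then $P_v$ itself ends at $v$ and the merge applies; if $v$ is a PE-vertex of type (1a) or (1b), the remark after the definition supplies a rerouting of $P_v$ ending at $v$ (indeed ending at $v$ and at one of $x_v,y_v$), and again the merge applies. In either case we contradict (i) with no edges to spare, so the statement holds in these cases with the exception never invoked.

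The hard part will be the type-(1c) PE-vertex, where both $v^-$ and $v^+$ are heavy but $P_v$ admits no internal rerouting ending at $v$. Here I would not preserve $V(P_v)$ intact. Deleting the edge $vv^+$ splits $P_v$ into $A=x_v\cdots v^-v$, ending at $v$, and $B=v^+\cdots y_v$; attaching the rerouted $P$ to $A$ through $uv$ trades $\{P,P_v\}$ for the two paths $R=(\text{rerouted }P)\cup A$ and $B$, keeping $|\P|$ unchanged. Now $B$ has the heavy endpoint $v^+$, so by the definition of heavy some endpoint is adjacent to $v^+$; this witness cannot be $x_v$ or $y_v$ (they lie on $P_v\ni v^+$). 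If the witness is an endpoint of a third path, or the endpoint of $R$ coming from $P$'s surviving end, then $B$ joins to it and $|\P|$ drops, a contradiction. Running the symmetric deletion of $vv^-$ controls the heavy endpoint $v^-$ of the other leftover. The main obstacle is the bookkeeping that remains: the only witnesses escaping a count reduction are the original endpoints of $P$ that the rerouting has pushed into the interior, and tracing which endpoint survives each deletion forces $xv^-,yv^+\in E(G)$ (up to the labeling of $v^{\pm}$). Throughout one must check the new paths are neither cyclic nor of order $1$ or $3$, so the contradiction is genuinely with (i) and not a cheaper one; this is routine since $P$ and $P_v$ are long non-cyclic paths.

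Finally, the ``consequently'' is the first part specialized to the case where $u$ is already an endpoint of $P$, so that the rerouting hypothesis is automatic. An endpoint of $P$ is never weighty, since that would give $G[V(P)]$ a spanning cycle, excluded by Lemma~\ref{no-triangle}, and never heavy, since that would make two endpoints adjacent, excluded by Lemma~\ref{basic-facts}(1). Hence $u$ can fail to be neutral only by being light, i.e.\ by being adjacent to a PE-vertex $v$ of another path; the first part then forces the exceptional adjacency (written $ux\in E(G)$ in the statement), so in its absence no such PE-vertex exists and $u$ is neutral.
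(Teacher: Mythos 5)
Your handling of the easy cases (genuine endpoint, type (1a)/(1b)) and your overall strategy for type (1c) --- exploit the heaviness of $v^-$ and $v^+$ via merges, after splitting $P_v$ at $v$ --- match the paper's proof, which directly decomposes $P, P_s, P_v$ into $P_sx_sv^-P_vx_v$ and $y_vP_vvuP$ whenever the witness for $v^-$ (or $v^+$) lies on a third path. But the decisive last step is missing, and the mechanism you name cannot supply it. Both of your deletions ($vv^+$ and $vv^-$) are glued to the \emph{same} rerouted copy of $P$, which has the same surviving endpoint $z$; so each deletion only tells you that the corresponding witness is an original endpoint of $P$ other than $z$, and nothing so far prevents $v^-$ and $v^+$ from both being adjacent to the \emph{same} such endpoint, say $x$. ``Tracing which endpoint survives each deletion'' therefore cannot separate $x$ from $y$ and does not force $xv^-,yv^+\in E(G)$. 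The missing idea (which is what the paper actually uses) is a degree count: if $xv^-,xv^+\in E(G)$ then $x$ has only one neighbour inside $V(P)$, hence degree one in $G[V(P)]$, hence $x$ is an endpoint of \emph{every} Hamilton path of $G[V(P)]$ --- in particular of the rerouted one --- contradicting that $x$ was pushed into the interior. Only this forces the two witnesses to be distinct endpoints and yields the stated exception. (Your worry about the new paths being cyclic or of order $1$ or $3$ is unnecessary: every merge you invoke strictly lowers the path count, which already violates (i).)

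The ``consequently'' is also misread. In the paper it is applied to the \emph{rerouted} path: $u$ is typically an interior vertex of $P=xPy$ that some rerouting makes an endpoint alongside the surviving original endpoint $y$ (this is how it is used in Corollary~\ref{neutral}, Lemma~\ref{chord}(b), and Section~\ref{counting}). Under your reading ($u$ an original endpoint, so $u=x$) the alternative ``$ux\in E(G)$'' is vacuous, and your claim that ``the first part then forces the exceptional adjacency (written $ux\in E(G)$ in the statement)'' conflates two unrelated conditions: the exception is $xv^-,yv^+\in E(G)$, not $ux\in E(G)$. The correct derivation is: since $y$ survives as an endpoint of the rerouted path, the exceptional configuration would permit the merge $y_vP_vv^+yPuvP_vx_v$, so the exception cannot occur; hence $v$ is neither an endpoint nor a PE-vertex, so $u$ is neither heavy nor light, and $uy\notin E(G)$ (else $P$ is cyclic); the remaining possibilities for $u$ are neutral or weighty via $x$, i.e.\ $ux\in E(G)$.
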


\begin{proof}
If $P_v$ can also be routed so that $v$ is an endpoint, then $P, P_v$ can be combined into one path, a contradiction. So $v$ cannot be an endpoint or a PE-vertex defined as in (1a) or (1b).  

Let $v$ be a PE-vertex defined as in (1c). Let $P_v=x_vP_vv^-vv^+P_vy_v$.  Then $v^-$ and $v^+$ are heavy. We assume that $v^-x_s, v^+x_t\in E(G)$, where $x_s, x_t$ are endpoints of $P_s, P_t\in \P-\{P_v\}$, respectively.  If one of $P_s$ and $P_t$, say $P_s$, is not $P$, then we can decompose $P, P_s, P_v$ into two paths: $P_sx_sv^-P_vx_v$ and $y_vP_vvuP$, a contradiction.  So $P_s=P_t=P$.  If $y$ (and by symmetry, $x$) has only one neighbor on $P$, then $y$ must be  the other endpoint when $u$ is an endpoint of $P$, thus $yv^+\not\in E(G)$ (and similarly, $yv^-\not\in E(G)$),  or $P, P_v$ can be combined into one path $y_vP_vv^+yPuvP_vx_v$.  It follows that $xv^-, xv^+\in E(G)$, and thus $x$ has only one neighbor on $P$, a contradiction. So both $x$ and $y$ have at least two neighbors on $P$. Then we must have $xv^-, yv^+\in E(G)$.   

When $u$ and $y$ are the endpoints, $v$ cannot be an endpoint or a PE-vertex, so $u$ is not heavy or light and $uy\not\in E(G)$. Then $u$ is neutral or weighty, and when it is weighty, we have $xu\in E(G)$. 
\end{proof}

\begin{cor}\label{neutral}
Let $P=xu_1\ldots u_ky\in \P$ and $1\le i<j\le k$. Then
\begin{itemize}
\item if $xu_j\in E(G)$, then $u_{j-1}$ is neutral;
\item if $xu_{i+1}\in E(G)$ and $u_iu_j\in E(G)$, then $u_{j-1}$ is neutral or $xu_{j-1}\in E(G)$;
\item if $xu_{j+1}\in E(G)$  and $u_iu_j\in E(G)$, then  $u_{i+1}$ is neutral or $xu_{i+1}\in E(G)$;
\item if $xu_{j-1}\in E(G)$  and $u_iu_j\in E(G)$, then $u_{i-1}$ and $u_{i+1}$ are neutral or adjacent to $x$;
\item if $xu_{i+1}, xu_{j+1}, u_iu_j\in E(G)$, then $u_1, u_{j-1}$ are neutral.
\end{itemize}
\end{cor}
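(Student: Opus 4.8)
Looking at this corollary, I need to prove five implications, each deriving that certain vertices are neutral (or adjacent to $x$) under hypotheses about edges of the path $P = xu_1\ldots u_ky$. Let me think about the structure.

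The key tool is Lemma~\ref{neutral-end}: if a path can be rerouted so that a vertex $u$ becomes an endpoint, then $u$ is neutral (when paired with another endpoint $y$) provided $ux \notin E(G)$. So the strategy for each item is to reroute $P$ so that the target vertex becomes an endpoint, then invoke Lemma~\ref{neutral-end}.

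Let me sketch my plan.

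=== BEGIN PROOF PROPOSAL ===

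\begin{proof}
The plan is to reduce every item to Lemma~\ref{neutral-end} by exhibiting, in each case, a rerouting of $P$ that makes the target vertex an endpoint paired with $y$ (or with a vertex we can control), so that Lemma~\ref{neutral-end} forces it to be neutral unless it is adjacent to $x$.

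\medskip
\textbf{First item.} Suppose $xu_j\in E(G)$. Then $P$ can be rerouted as $u_{j-1}(u_{j-2}\cdots u_1 x u_j u_{j+1}\cdots y)$, i.e. we walk from $u_{j-1}$ backward to $x$, then jump along $xu_j$ and continue to $y$. This makes $u_{j-1}$ and $y$ the two endpoints. Since $u_{j-1}x\notin E(G)$ would be needed for the ``neutral'' conclusion, I note that $xu_{j-1}\in E(G)$ together with $xu_j\in E(G)$ is excluded: by Lemma~\ref{no-two-consecutive} (with $i=j-1$) consecutive neighbors $u_{j-1},u_j$ of $x$ cannot both occur. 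Hence $u_{j-1}x\notin E(G)$, and Lemma~\ref{neutral-end} gives that $u_{j-1}$ is neutral.

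\medskip
\textbf{Remaining items.} For the other four cases I would build the rerouting from the two available chords. The idea in each is the same: the hypotheses provide an edge $xu_{i+1}$ (or $xu_{j+1}$, or $xu_{j-1}$) and an edge $u_iu_j$, and by combining these chords one can detach a segment of $P$ so that the vertex in question (namely $u_{j-1}$, $u_{i+1}$, $u_{i\pm1}$, or $u_1$) is relocated to an endpoint. For instance, in the second item the chord $xu_{i+1}$ lets us reverse the block $xPu_i$, and the chord $u_iu_j$ then lets us attach it so that $u_{j-1}$ becomes an endpoint beside $y$; Lemma~\ref{neutral-end} then yields ``$u_{j-1}$ neutral or $xu_{j-1}\in E(G)$'', the ``or'' branch being exactly the exceptional case Lemma~\ref{neutral-end} permits. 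The symmetric third item is handled by the mirror rerouting, and the fourth and fifth items combine both chords on the same side, producing two endpoint candidates ($u_{i-1}$ and $u_{i+1}$, respectively $u_1$ and $u_{j-1}$) to which Lemma~\ref{neutral-end} applies in turn.

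\medskip
\textbf{Main obstacle.} The delicate point is verifying in each case that the rerouting really is a valid (simple) path ending in $y$, so that the unused endpoint $y$ plays the role required by Lemma~\ref{neutral-end}, and that the vertex being relocated is genuinely an \emph{endpoint} rather than an interior vertex. The chord configurations must be traced carefully to ensure the segments concatenate without repeating vertices and without accidentally creating a cyclic path (which would contradict the structure of $\P_3$ by Lemma~\ref{no-triangle}). Once the correct rerouting is written down, each conclusion is an immediate application of Lemma~\ref{neutral-end}, with the ``adjacent to $x$'' alternatives corresponding precisely to its stated exceptions.
\end{proof}

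=== END PROOF PROPOSAL ===
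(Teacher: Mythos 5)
Your proposal is correct and follows the same route the paper intends: the corollary is stated without proof there, as an immediate consequence of Lemma~\ref{neutral-end} applied to a rerouting of $P$ that makes the target vertex an endpoint paired with $y$ (exactly as in the paper's one-line proof of the adjacent Corollary~\ref{no-neighbor-of-y}), and your use of Lemma~\ref{no-two-consecutive} to kill the ``$xu_{j-1}\in E(G)$'' escape in the first item is the right supplementary step. The only thing missing is writing out the explicit reroutings for the last four items (e.g.\ $u_{j-1}Pu_{i+1}xPu_iu_jPy$ for the second), but these all exist exactly as you describe.
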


\begin{cor}\label{no-neighbor-of-y}
Let $P=xu_1\ldots u_ky\in \P$. If $xPu_i$ is cyclic and $u_{i+1}$ is heavy or light, then a vertex $u\in xPu_{i-1}$ is adjacent to $y$ or $u_j$ with $yu_{j-1}\in E(G)$ only when $u_{i+1}$ is light and is adjacent to $v_s\in P'=x'P'v_{s-1}v_sv_{s+1}P'y'$ such that $xv_{s-1},yv_{s+1}\in E(G)$.
%\end{enumerate}
\end{cor}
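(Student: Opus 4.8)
The plan is to show that, under the hypotheses, the path $P$ can always be rerouted into a single Hamiltonian path of $V(P)$ that has $u_{i+1}$ as an endpoint; the conclusion then follows by feeding this rerouting into the minimality of $\P$ and into Lemma~\ref{neutral-end}.

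First I would isolate the one place the cyclicity is used. Writing $B=\{x,u_1,\dots,u_i\}$, the fact that $G[B]$ has a spanning cycle $C$ means that for \emph{every} vertex $w\in B$ there is a Hamiltonian path of $B$ starting at $w$ (delete one of the two edges of $C$ at $w$). This per-vertex flexibility is exactly what a non-cyclic segment lacks, and it is the sole role of the hypothesis that $xPu_i$ is cyclic.

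Next I would construct the rerouting from the assumed adjacency of $u\in\{x,\dots,u_{i-1}\}$. If $uy\in E(G)$, then
\[
P^\ast=u_{i+1}u_{i+2}\cdots u_k\,y\,u\,[\text{Ham path of }B\text{ from }u]
\]
is a Hamiltonian path of $V(P)$ with $u_{i+1}$ as one endpoint. If instead $uu_j\in E(G)$ with $yu_{j-1}\in E(G)$ (the main case being $j\ge i+2$), I would first flip the tail along $yu_{j-1}$, traversing $u_{i+1}\cdots u_{j-1}\,y\,u_k u_{k-1}\cdots u_j$, and then splice on $u\,[\text{Ham path of }B\text{ from }u]$ through the edge $uu_j$; again $u_{i+1}$ becomes an endpoint. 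The degenerate possibilities (for instance $u=x$, where $xy\notin E(G)$ because $\P$ has no cyclic paths, or $j=i+1$) are handled by the same folding idea and are no harder. The verification that each $P^\ast$ is a simple path covering $V(P)$ is routine bookkeeping.

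With $u_{i+1}$ realized as an endpoint of a rerouting of $P$, I would finish by splitting on whether $u_{i+1}$ is heavy or light. If $u_{i+1}$ is heavy it is adjacent to an endpoint $z'$ of another path $Q'\neq P$ with $z'\notin V(P)$; replacing $P$ by $P^\ast$ gives a cover with the same number of paths in which $u_{i+1}$ and $z'$ are adjacent endpoints of distinct paths, so combining them along $u_{i+1}z'$ yields a cover with fewer paths, contradicting the minimality in (i) (this is the principle of Lemma~\ref{basic-facts}(1)). Hence $u_{i+1}$ must be light, say adjacent to a PE-vertex $v_s\in P'=x'P'v_{s-1}v_sv_{s+1}P'y'$. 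Applying Lemma~\ref{neutral-end} with the roles $u=u_{i+1}$ and $v=v_s$ — legitimate precisely because we have just exhibited a rerouting of $P$ with $u_{i+1}$ an endpoint — the PE-vertex $v_s$ can occur only if $xv_{s-1},\,yv_{s+1}\in E(G)$, which is exactly the stated exception. I expect the only genuine work to be the careful construction and validity check of $P^\ast$ in the second adjacency case; once $u_{i+1}$ is an endpoint, the rest is an immediate appeal to the two earlier results.
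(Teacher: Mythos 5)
Your proposal is correct and follows essentially the same route as the paper: the paper's proof is exactly the observation that under the stated adjacency $P$ can be rerouted (using the spanning cycle of $G[V(xPu_i)]$) so that $u_{i+1}$ becomes an endpoint, after which the heavy case contradicts minimality and the light case is settled by Lemma~\ref{neutral-end}. You have merely written out the explicit reroutings that the paper leaves implicit.
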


\begin{proof}
Under the condition, $P$ can be rerouted so that $u_{i+1}$ is an endpoint.  So the statement follows from Lemma~\ref{neutral-end}. 
\end{proof}

\begin{lem}\label{25-well}
PE-vertices form an independent set. Consequently, no light vertex is a PE-vertex. 
\end{lem}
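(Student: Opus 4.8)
The plan is to argue by contradiction: suppose two PE-vertices $u,w$ satisfy $uw\in E(G)$ and derive a contradiction with the optimality of $\P$. The single most useful structural fact is that $G$ is $3$-regular, so an interior vertex $u_i$ that is a PE-vertex has its three neighbors equal to its two path-neighbors $u_{i-1},u_{i+1}$ together with the light vertex on a \emph{different} path; in particular a PE-vertex has exactly one edge leaving its own path (and, since the endpoints $x,y$ cannot satisfy (1a)--(1c), PE-vertices are interior). I would first split into the case that $u,w$ lie on the same path and the case that they lie on different paths.

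For the same-path case the observation above immediately rules out a chord: if $u=u_i$ and $w=u_j$ were joined by a chord ($j\ge i+2$), then $u_i$ would have all three neighbors $u_{i-1},u_{i+1},u_j$ on its own path and hence no light vertex, so it could not be a PE-vertex. Thus $u,w$ must be consecutive, say $w=u_{i+1}$. Since $w$ is a PE-vertex it is neutral, hence not heavy, which kills every defining clause of $u_i$ except those containing $xu_{i+1}\in E(G)$; symmetrically $u$ is not heavy, which forces $yu_i\in E(G)$. But $xu_{i+1},yu_i\in E(G)$ yield the spanning cycle $x\,u_1\cdots u_i\,y\,u_k\cdots u_{i+1}\,x$ of $G[V(P)]$, making $P$ cyclic and contradicting Lemma~\ref{no-triangle}.

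For the different-path case the edge $uw$ is the unique off-path edge of each of $u$ and $w$. If both are of type (1a) or (1b), then by the remark following the definition I reroute $P$ so that $u$ is an endpoint and independently reroute $P_w$ so that $w$ is an endpoint; the edge $uw$ then concatenates these into a single path on $V(P)\cup V(P_w)$, contradicting the minimality in condition (i). Otherwise at least one of them, say $w$, is of type (1c). Assuming first that $u$ is of type (1a) or (1b), I reroute $P$ so that $u$ is an endpoint and apply Lemma~\ref{neutral-end} to the edge $uw$: since $w$ is a PE-vertex on a different path, the only escape is that $w$ is of type (1c) with $xw^-,yw^+\in E(G)$ (where $x,y$ are the endpoints of $P$), and moreover $x,y$ each have a second neighbor on $P$. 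In the representative subcase where $u$ is of type (1a), so the two chords are $xu_{i+1},yu_{i-1}\in E(G)$, these edges together with $u_iw,\,xw^-,\,yw^+$ assemble into an explicit Hamilton path of $G[V(P)\cup V(P_w)]$, namely
\[
x'\cdots w^-\,x\,u_1\cdots u_{i-1}\,y\,u_k\cdots u_{i+1}\,u_i\,w\,w^+\cdots y',
\]
again contradicting condition (i).

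The main obstacle is the remaining escape subcases: $u$ of type (1b) (where $y$'s forced second neighbor on $P$ need not be $u_{i-1}$, so the Hamilton path above must be rebuilt using that chord together with the heaviness of $u_{i-1}$), and the subcase where $u$ and $w$ are \emph{both} of type (1c), in which neither vertex can be made an endpoint of its own path and Lemma~\ref{neutral-end} does not apply directly. For these I would imitate the three-paths-into-two decomposition from the proof of Lemma~\ref{neutral-end}: each heavy path-neighbor of $u$ or $w$ attaches to an endpoint of a further path, and splitting $P$ at $u_i$ and $P_w$ at $w$ and regluing along $uw$ and those attachments produces a cover with one fewer path, contradicting (i); when an attachment happens to return to $P$ or $P_w$ one instead produces a cover of the same size with fewer weighty vertices, contradicting condition (vi). Finally, the ``consequently'' is immediate: a light vertex is by definition adjacent to a PE-vertex on another path, so if it were itself a PE-vertex we would have two adjacent PE-vertices, which the first part forbids.
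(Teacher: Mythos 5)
Your overall strategy is the paper's: reduce via Lemma~\ref{neutral-end} to the situation where at least one of the two adjacent PE-vertices has type (1c), then derive a smaller cover. Your same-path case is fine (in fact, by $3$-regularity a vertex $u_{i+1}$ with $xu_{i+1}\in E(G)$ has all three neighbours on $P$ and so cannot be a PE-vertex at all, which kills the consecutive case even faster than your cyclic-path argument), and the explicit Hamilton path you exhibit for the subcase ``$u$ of type (1a), $w$ of type (1c) with $xw^-,yw^+\in E(G)$'' is correct --- indeed it fills in a step the paper itself passes over silently.

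The genuine gap is the case where both PE-vertices have type (1c), which is where almost all of the work in this lemma lies. Your plan --- split $P$ at $u$ and $P_w$ at $w$, reglue along $uw$ and along the edges from $u^-,u^+,w^-,w^+$ to endpoints of further paths, and obtain a cover with one fewer path --- only goes through when those four endpoints lie on paths other than $P$ and $P_w$ and are suitably distinct. The configurations you must eliminate are exactly: all four attachments landing on one common third path (where two of the four endpoints may coincide, since an endpoint can be adjacent to two heavy vertices), and attachments landing back on $P$ or $P_w$. After the easy reductions one is in fact \emph{forced} into the configuration where every attachment returns, i.e.\ $u^-,u^+$ are adjacent to endpoints of $P_w$ and $w^-,w^+$ to endpoints of $P$; there your fallback of producing ``a cover of the same size with fewer weighty vertices, contradicting (vi)'' is asserted without any construction and is not what resolves the case. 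What is actually needed is, for each residual adjacency pattern (e.g.\ $xw^-, x'u^+\in E(G)$ versus $xw^-, x'u^-, y'u^+\in E(G)$), an explicit Hamilton path of $G[V(P)\cup V(P_w)]$, still contradicting condition (i). Until those recombinations are written down and checked, the independence of PE-vertices is not established. The ``consequently'' clause is immediate once the first part is, as you say.
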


\begin{proof}
Let $u, u'\in E(G)$ be PE-vertices such that $uu'\in E(G)$ with $u\in P_u=xPu^-uu^+Py$ and $u'\in P_{u'}=x'Pu'^-u'u'^+Py'\in \P-\{P\}$.   By Lemma~\ref{neutral-end}, we may assume that $u, u'$ are PE-vertices defined as in (1c), thus assume that $u^-, u^+, u'^-, u'^+$ are adjacent to endpoints $s\in P_s, t\in P_t, s'\in P_{s'}, t'\in P_{t'}$, respectively, where $P_s, P_t\not=P_u$ and $P_{s'}, P_{t'}\not=P_{u'}$.

First assume that none of the pairs $(P_s, P_{s'}), (P_s, P_{t'}), (P_t, P_{s'}), (P_t, P_{t'})$ contains two different paths. Then  $P_s=P_{s'}=P_{t'}=P_t$.  Without loss of generality, we may assume that $s, s'$ are the endpoints of $P_s$. Then $P_u, P_{u'}$ and $P_s$ can be combined into two paths: $xP_uu^-sP_ss'u'^-P_{u'}x'$ and $yP_uuu'P_{u'}y'$, a contradiction.  Therefore, without loss of generality, we assume that $P_s\not=P_{s'}$.

If $P_s\not=P_{u'}$ and $P_{s'}\not=P_{u}$, then we reach a contradiction by combining $P_u, P_s, P_{u'}, P_{s'}$ into three paths: $P_su^-P_ux, P_{s'}u'^-P_{u'}x'$ and $yP_uuu'P_{u'}y'$.  Thus,  we may assume that $P_s=P_{u'}$ and let $s=x'$.

If $P_{s'}\not=P_u$,  we can decompose $P_{s'}, P_u, P_{u'}$ into fewer paths: $xP_uu^-x'P_{u'}u'^-s'P_{s'}$ and $yPuu'P_{u'}y'$, again a contradiction.  Therefore, we may assume that $P_{s'}=P_u$.   By symmetry, we also know that $P_t=P_{u'}$ and $P_{t'}=P_u$.

Let $xu'^-\in E(G)$.   If $x'u^+\in E(G)$ (or by symmetry $y'u^-\in E(G)$), then we reach a contradiction by combing $P_u$ and $P_{u'}$ into one path $yP_uu^+x'P_{u'}u'^-xP_uuu'P_{u'}y'$. Thus,  we let $x'u^-, y'u^+\in E(G)$. But we again can combine the two paths into one path $yP_uu^+y'P_{u'}u'uP_uxu'^-P_{u'}x'$.
\end{proof}

\begin{lem}\label{paired-cycle}
Let $P=xu_1u_2\ldots u_ky\in \P_4$. Assume that for some $1<s<i<t<k$,  the subgraphs induced by $V(xPu_i)$ and $V(u_{i+1}Py)$ contain spanning paths so that $u_s$ and $u_t$ are the endpoints, respectively.  If $u_s, u_t$ are heavy or light, then
\begin{enumerate}[(a)]
\item $u_s$ and $u_t$ are both light; or
\item $u_s, u_t$ are heavy and adjacent to a same endpoint of $P'\in \P_4-\{P\}$; or
\item $u_s$ is heavy and $u_t$ is light (or by symmetry $u_t$ is heavy and $u_s$ is light) with $x_wu_s, vu_t\in E(G)$, where $x_w$ is an endpoint of $P_w\in \P_4-\{P\}$ and $v\in P_v=x_vP_vv^-vv^+P_vy_v$, such that
\begin{itemize}
\item[(c1)] $P_w=P_v$, and $x_vu_s, x_vv^+\in E(G)$ and $v^-$ is adjacent to $x$ or $y$, or
\item[(c2)] $P_w\not=P_v$, and $v^-, v^+$ are adjacent to $x, y$ or $x_w$.
\end{itemize}
\end{enumerate}
Consequently,  let $xPu_i$ be cyclic, then
\begin{enumerate}
\item if $u_{i+1}$ is heavy, then $xPu_i$ contains at most one heavy or light vertex; and
\item if $u_{i+1}$ is light, then $xPu_i$ contains at most one heavy vertex.
\end{enumerate}
\end{lem}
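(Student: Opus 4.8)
The plan is to use the hypothesis in the form in which it is genuinely useful: it hands us a spanning path $Q_1$ of $G[V(xPu_i)]$ with $u_s$ as an endpoint and a spanning path $Q_2$ of $G[V(u_{i+1}Py)]$ with $u_t$ as an endpoint. The key point is \emph{not} to recombine these into a single reroute of $P$ (that would make $u_s,u_t$ simultaneous endpoints and, for heavy vertices, force an immediate merge that kills $P$ altogether, so (b) could never occur); instead I break $P$ into the two pieces. Since $V(Q_1)\sqcup V(Q_2)=V(P)$, replacing $P$ by $Q_1,Q_2$ gives a cover with exactly one more path than $\P$. Now $u_s$ and $u_t$ are endpoints of $Q_1,Q_2$, and as each is heavy or light, each is adjacent to an endpoint or a PE-vertex of another path; the operative move is to splice $Q_1$ (resp. $Q_2$) onto that path. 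Each successful splice removes one path, so two successful splices produce a cover with fewer paths than $\P$, contradicting condition (i). Hence at least one splice must be blocked, and the lemma is exactly a catalogue of when a block can occur.

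First I would treat $u_s,u_t$ both heavy, say $u_s$ adjacent to the endpoint $x_1$ of $P_1\ne P$ and $u_t$ to the endpoint $x_2$ of $P_2\ne P$ (so $x_1,x_2\notin V(P)$). If $P_1\ne P_2$, the two splices are independent and each drops the count, a contradiction. If $P_1=P_2=:P'$ but $x_1\ne x_2$, these are the two endpoints of $P'$ and $Q_1,P',Q_2$ splice into one path, dropping the count by two. The only surviving case is $x_1=x_2$: a single endpoint $c$ of $P'$ adjacent to both $u_s$ and $u_t$, at which only one splice is possible, leaving the count unchanged; this is precisely conclusion (b). For $u_s$ heavy and $u_t$ light (and symmetrically), the same bookkeeping applies, except the light-neighbour $v$ of $u_t$ is only a PE-vertex, so before splicing $Q_2$ onto $P_v$ I must first reroute $P_v$ to expose $v$ as an endpoint; this is free for PE-vertices of type (1a)/(1b) by the remark after the definition, and for type (1c) it requires the rearrangement used in Lemma~\ref{neutral-end}. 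Recording exactly which adjacencies among $x,y,x_w,x_v,v^-,v^+$ obstruct this reroute-and-splice is what yields the two alternatives (c1) (when $P_w=P_v$) and (c2) (when they differ). If $u_s,u_t$ are both light there is nothing to prove, since that is conclusion (a).

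For the \emph{Consequently} part I would take $u_t:=u_{i+1}$, which is an endpoint of the trivial spanning path $u_{i+1}u_{i+2}\cdots y$, so the right-hand hypothesis holds for free; and since $xPu_i$ is cyclic, deleting from its Hamilton cycle an edge at any prescribed vertex gives a spanning path of the left half with that vertex as an endpoint, so the left-hand hypothesis holds for \emph{every} heavy or light vertex $u_s$ of $xPu_i$ and the main statement applies to each pair $(u_s,u_{i+1})$. Suppose $xPu_i$ contained two such vertices. In case (1) the vertex $u_{i+1}$ is heavy; being interior to $P$ (an endpoint cannot be heavy) it has a unique off-path neighbour $w$, an endpoint of another path. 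If both vertices are heavy, then (b) applied to each pairing forces both to be adjacent to $w$; together with $u_{i+1}\sim w$ and the path-successor of $w$ this gives $w$ degree at least $4$, contradicting $3$-regularity. If one or both are light the relevant pairings fall under (c), and I would combine the constraints (c1)/(c2) on the common PE-neighbour of $u_{i+1}$ with the cyclicity of $xPu_i$ and $3$-regularity to reach the same kind of contradiction; case (2), where $u_{i+1}$ is light so that only (c) is ever available, is handled identically.

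I expect the main obstacle to be the mixed case with a light $u_t$ whose PE-neighbour $v$ is of type (1c). There $v$ cannot be rerouted to an endpoint for free, so completing the splice of $Q_2$ into $P_v$ drags in the paths attached to the heavy vertices $v^-,v^+$, and one must run a Lemma~\ref{neutral-end}/Lemma~\ref{25-well}-style case analysis on those endpoints/PE-vertices to see exactly when the splice goes through. Matching this bookkeeping to the stated adjacency patterns in (c1) and (c2)—and checking that no intermediate rearrangement inadvertently reduces the count through some other route, or is prevented only by a coincidence I have not listed—is where the real work lies; the both-heavy case and the degree count in the \emph{Consequently} part are comparatively routine.
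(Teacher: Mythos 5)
Your overall mechanism is the right one and is the same as the paper's: replace $P$ by the two spanning paths $Q_1,Q_2$ (one extra path), try to splice each onto the path containing the relevant neighbour of $u_s$ resp.\ $u_t$, and note that two successful splices beat the minimality of $\P$. Your treatment of the all-heavy case is correct and matches the paper (the three sub-cases $P_1\ne P_2$, $P_1=P_2$ with distinct endpoints, and the surviving case $x_1=x_2$ giving (b)), as does the degree count at the common endpoint in the both-heavy sub-case of the ``Consequently'' part. The gap is that the heavy--light case, which is where essentially all of the content of the lemma lives, is announced rather than proved: ``recording exactly which adjacencies obstruct the reroute-and-splice'' \emph{is} the proof, and it is not a matter of simply asking which of your two splices fails. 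Concretely, the conditions in (c1) and (c2) come from \emph{additional} recombinations that you have not exhibited. For a reroutable PE-vertex $v$ (types (1a)/(1b)) one must first show $x_w=x_v$ — which requires, in the sub-case $P_w=P_v$ with $x_w=y_v$, a single path that threads $Q_1$, all of $P_v$ via the chord $x_vv^+$, and $Q_2$ — and this already kills type (1a) outright; then one runs a second recombination through the heavy neighbour $v^-$ and its endpoint-neighbour $z$ on a third path $P_z$, and it is the failure of \emph{that} move which forces $z\in\{x,y\}$, i.e.\ (c1). For type (1c) the splice through $v$ is unavailable altogether, and (c2) is extracted from a four-path rearrangement $P,P_w,P_z,P_v$ into three paths routed through $v^-$ (and symmetrically $v^+$), whose failure forces $z\in\{x,y,x_w\}$. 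Without these explicit surgeries you have no argument that, for instance, a type (1a) neighbour cannot occur, or that $v^-$ cannot attach to an arbitrary fifth path.

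Two smaller points. First, your aside that recombining $Q_1,Q_2$ into a single reroute of $P$ ``would make $u_s,u_t$ simultaneous endpoints'' is moot: the hypothesis gives no edge between an endpoint of $Q_1$ and an endpoint of $Q_2$, so such a recombination is not generally available anyway; nothing is lost, but the remark suggests a freedom that isn't there. Second, for the ``Consequently'' statements the paper is also silent, so your sketch is not worse than the source; but be aware that the sub-cases involving a light vertex genuinely need the detailed adjacency information from (c1)/(c2) (e.g.\ that $v^\pm$ must land on $x$, $y$ or $x_w$, each of which has very limited remaining degree), so they cannot be dismissed as ``handled identically'' to the degree count at $w$ until (c1)/(c2) themselves have been established.
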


\begin{proof}
Let $P_1, P_2$ be the spanning paths on $V(xPu_i)$ and $V(u_{i+1}Py)$ so that $u_s, x'$ and $u_t, y'$ are endpoints, respectively.  We may assume that at least one of $u_s, u_t$ (say $u_s$) is heavy, or we have (a).  Let $x_wu_s\in E(G)$ from the endpoint $x_w\in P_w\in \P_4-\{P\}$ and $vu_t\in E(G)$ from $v\in P_v=x_vP_vv^-vv^+P_vy_v\in \P_4-\{P\}$.

Assume first that $P_v$ can be rerouted such that $v$ is an endpoints.   If $v$ is heavy, then we must have (b),  or $P, P_w, P_v$ can be replaced with paths $P_wwuP_1$ and $P_vvu_{i+1}P_2$ to obtain a better path cover.  So let $v$ be light, and by symmetry let $x_vv^+\in E(G)$ and $v^-$ be heavy and adjacent to an endpoint $z\in P_z\in \P-\{P_v\}$.   Then $x_v=x_w$, or we replace $P, P_v, P_w$ with $y_vPv^+x_vP_vvu_tP_2y'$ and $y_wP_wx_wu_sP_1x'$ (when $x_w\not=y_v$) or $x'P_1u_sP_1y_vP_vv^+x_vP_vvu_tP_2y'$ (when $x_w=y_v$).  Now $z$ must be $x$ or $y$, as in (c1), or we could get a cover with fewer paths: $P_1'=P_zzv^-P_vx_vu_sP_1x'$ and $P_2'=y_vP_vvu_tP_2y'$.

Now assume that both $v^-$ and $v^+$ are heavy.  We may assume that $v^-$ is adjacent to an endpoint $z\in P_z\in \P-\{P_v\}$. If $z\not\in \{x,y,x_w\}$, then we can replace $P, P_w, P_z, P_v$ with paths $P_wwuP_1, P_2u_{i+1}vP_vy_v$ and $P_zzv^-P_vx_v$, a contradiction. So $v^-$, and by symmetry $v^+$, is adjacent to $x, y$ or $x_w$, as in (c2).  
\end{proof}

\begin{lem}\label{no-consecutive}
The subgraph induced by the set of heavy and light vertices contains no edges.
\end{lem}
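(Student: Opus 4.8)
The plan is to assume, for contradiction, an edge $ab\in E(G)$ with $a,b$ each heavy or light, and to exploit $3$-regularity to pin down the local picture so tightly that a short rearrangement of $\P$ either reduces the number of paths (contradicting minimality~(i)), produces a net (contradicting Lemma~\ref{net}), or violates one of the later optimality conditions. The engine throughout is the merging principle used repeatedly above: if the endpoint of one path is adjacent to the endpoint of a different path, the two combine into a single path.

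First I would record the local structure. Since $G$ is $3$-regular, a heavy vertex $v$ (with off-path neighbor the endpoint $e_v$) has $N(v)=\{v^-,v^+,e_v\}$, and an internal light vertex $v$ (with off-path neighbor the PE-vertex $u_v$) has $N(v)=\{v^-,v^+,u_v\}$; in particular each such $v$ has exactly one neighbor off its own path. Next I would show heavy and light vertices are \emph{internal}. For a heavy vertex this is immediate from Lemma~\ref{basic-facts}(1): an endpoint adjacent to the endpoint $e_v$ of another path would contradict that endpoints of different paths are nonadjacent. For a light vertex $v$ that is an endpoint, adjacent to a PE-vertex $u$, if $u$ is of type (1a) or (1b) then by the Remark $P_u$ can be rerouted so that $u$ is an endpoint, whereupon $u$ and $v$ are adjacent endpoints of distinct paths and merge into one path, a contradiction; the type-(1c) case is dispatched exactly as in the proof of Lemma~\ref{neutral-end}, using a heavy neighbor of $u$ to first free $u$ as an endpoint.

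With these facts the crux is that $a$ and $b$ must be \emph{consecutive on a common path}. Indeed, the unique off-path neighbor of $a$ is its endpoint $e_a$ (if $a$ is heavy) or its PE-vertex $u_a$ (if $a$ is light), and neither lies among the heavy or light vertices: endpoints are not heavy or light by the previous step and Lemma~\ref{basic-facts}(1), while a PE-vertex is not light by Lemma~\ref{25-well} and not heavy (its off-path neighbor is the internal light vertex, not an endpoint). Hence $b$, being heavy or light, must be one of $a^-,a^+$, so $a,b$ sit consecutively on a path $A$. I would then split $A$ between $a$ and $b$ into $A_1$ (ending at $a$) and $A_2$ (ending at $b$), reattach $A_1$ through the edge $a\sigma_a$ to the path of $a$'s special neighbor $\sigma_a$, and $A_2$ through $b\sigma_b$ to the path of $b$'s special neighbor $\sigma_b$, after first rerouting any PE-vertex among $\sigma_a,\sigma_b$ to be an endpoint. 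Counting the paths involved shows the total strictly drops, except when $\sigma_a=\sigma_b$; that coincidence (possible only when both are the same endpoint) forces the triangle $\sigma_aab$ with three distinct outside neighbors, i.e.\ a net, contradicting Lemma~\ref{net}.

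The main obstacle is the careful treatment of two features. First, the several ways the auxiliary paths $P_a,P_b,A$ and the paths of $\sigma_a,\sigma_b$ may coincide; each coincidence must be checked to confirm the rearrangement still yields fewer paths — or a forbidden cyclic path, which is equally good since $\P_3$ contains none by Lemma~\ref{no-triangle} — rather than accidentally reproducing the same count. Second, and more delicate, are light vertices arising from type-(1c) PE-vertices: to free such a $\sigma$ as an endpoint one must borrow the path of one of its heavy neighbors, as in Lemma~\ref{neutral-end} and Lemma~\ref{paired-cycle}, and one has to verify that this borrowing neither creates a cyclic path nor reintroduces a bad or annoying endpoint (which conditions~(iv)--(v) and Lemmas~\ref{no-7} and~\ref{annoying} forbid). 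Once these finitely many configurations are settled, every case yields a contradiction, so the set of heavy and light vertices induces no edge.
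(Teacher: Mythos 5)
Your first half matches the paper's opening move exactly: the paper reduces to consecutive vertices on a single path by citing Lemma~\ref{25-well} (plus the observation that endpoints and PE-vertices are never heavy or light), and your more explicit derivation of that reduction is sound. The problem is in the second half. Your central claim --- that splitting $A$ between $a$ and $b$ and reattaching $A_1$ to $\sigma_a$'s path and $A_2$ to $\sigma_b$'s path ``strictly drops'' the path count except when $\sigma_a=\sigma_b$ --- is false. Take the paper's Case~2.1 (former subcase): $a=u_i$ is heavy with $x_vu_i\in E(G)$, $b=u_{i+1}$ is light with $vu_{i+1}\in E(G)$, where $v$ is a type-(1b) PE-vertex on the \emph{same} path $P_v$ with $x_vv^+\in E(G)$ and $xv^-\in E(G)$. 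Here $\sigma_a=x_v\neq v=\sigma_b$, yet your reattachment cannot be carried out: rerouting $P_v$ to make $v$ an endpoint consumes $x_v$ as an interior vertex, so only one of the two reattachments is available, and one checks (using $3$-regularity) that \emph{no} rearrangement of $P$ and $P_v$ reduces the count. The paper does not escape by a merge at all: it observes that $x_v$ is precisely an \emph{annoying endpoint} in the sense of condition~(v) and invokes Lemma~\ref{annoying}. That is, the configuration is excluded a priori by a tailor-made optimality condition, not by verifying that a rearrangement ``does not reintroduce'' an annoying endpoint, which is how you frame it. Without recognizing this, your scheme has an unclosed case.

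More broadly, the coincidences you defer to ``the main obstacle'' are where essentially all of the work lies, and the paper organizes them through Lemma~\ref{paired-cycle}, which you do not use: it classifies the pair $(u_s,u_t)$ into cases (a), (b), (c1), (c2), and the proof of Lemma~\ref{no-consecutive} then walks through heavy/heavy (net, as you say), heavy/light (the annoying-endpoint case above plus two genuine merges), and light/light with PE-vertices of types (1a)/(1b) versus (1c). The type-(1c)/type-(1c) case with $P_w=P_v$ requires combining up to five paths and a further argument about which of the four endpoint-paths $P_{z_1},\dots,P_{z_4}$ coincide before a merge materializes. So while your strategy is the same in spirit as the paper's, the proposal as written both overstates what the generic count gives and leaves unresolved exactly the configurations that make the lemma nontrivial.
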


\begin{proof}
By Lemma~\ref{25-well}, two vertices that are heavy or light are adjacent only if they are consecutive vertices on a path in $\P$.
Let $u_i, u_{i+1}$ be two vertices on $P$ that are heavy or light.  We may assume that $u_{i+1}$ is adjacent to $x_v$ or the vertex $v$ on $P_v=x_vP_vv^-vv^+P_vy_v\in \P-\{P\}$. As $xPu_i$ and $u_{i+1}Py$ contain spanning trees such that $u_i$ (and $x$) and $u_{i+1}$ (and $y$) are endpoints, respectively, by Lemma~\ref{paired-cycle}, the following are the possible cases:

{\bf Case 1.} both $u_i$ and $u_{i+1}$ are heavy.  Then they are adjacent to the same endpoint $x_v\in P_v$.  In this case, $x_vu_iu_{i+1}$ is a net, which cannot occur by Lemma~\ref{net}.

 {\bf Case 2.} $u_i$ is heavy and $u_{i+1}$ is light.   Then by Lemma~\ref{paired-cycle}, we consider the following cases.
 
 {\bf Case 2.1.} $x_vu_i, xv^-, vu_{i+1}, x_vv^+\in E(G)$, or $x_vu_i, yv^-, vu_{i+1}, x_vv^+\in E(G)$. In the former case, $x_v$ is an annoying endpoint, which by Lemma~\ref{annoying} cannot exist, and in the latter case, we can combine $P, P_v$ into one path: $xPu_ix_vP_vv^-yPu_{i+1}vP_vy_v$.

 {\bf Case 2.2.} $vu_{i+1}, xv^-, x_wu_i\in E(G)$, where $x_w$ is an endpoint of $P_w\in \P-\{P,P_v\}$, and $v^+$ is adjacent to $y$ or $x_w$.  Then $P, P_v, P_w$ can be combined into two paths: $P_wx_wu_iPxv^-P_vx_v$ and $y_vP_vvu_{i+1}Py$, a contradiction.

{\bf Case 3.} both $u_i$ and $u_{i+1}$ are light.  Then $wu_i, vu_{i+1}\in E(G)$ for PE-vertices $w\in P_w=x_ww_1\ldots w_{s-1}ww_{s+1}\ldots w_ly_w\in \P-\{P\}$ and $v\in P_v=x_vv_1\ldots v_{t-1}vv_{t+1}\ldots v_my_v\in \P-\{P\}$.

 {\bf Case 3.1} $w$ and $v$ are PE-vertices defined as in (1a) or (1b).  Then $P_w$ can be rerouted so that $w$ and $y_w$ are endpoints, and $P_v$ can be rerouted so that $v$ and $y_v$ are endpoints. 

\begin{itemize}
\item If $P_w\not=P_v$, then $P, P_v, P_w$ can be combined into paths $P_wwu_iPx$ and $P_vvu_{i+1}Py$, a contradiction.
\item If $P_w=P_v$, then $x_w$ or $y_w$ cannot be adjacent to two weighty vertices, by Lemma~\ref{no-two-consecutive}, so we may assume that $x_ww_{s+1}\in E(G)$ and $v=w_j$ so that $y_ww_{j-1}\in E(G)$. Clearly, $j<s$, or $P, P_w$ can be combined into one path $xPu_iwP_wx_ww_{s+1}P_ww_{j-1}y_wP_ww_ju_{i+1}Py$. By definition, $w_{j+1}$ is heavy is adjacent to an endpoint $z\in P_z\not=P_w$, so $P, P_w, P_z$ can be decomposed into paths $P_zzw_{j+1}P_ww_su_iPx$ and $yPu_{i+1}w_jP_wx_ww_{s+1}P_wy_w$, a contradiction.
\end{itemize}

{\bf Case 3.2} $w$ is a PE-vertex defined as in (1a) or (1b), and $v$ is a PE-vertex defined as in (1c).  Let $x_ww_{s+1}\in E(G)$.  By definition, $v_{t-1}$ and $v_{t+1}$ are heavy, then at least one of them, say $v_{t-1}$, is not adjacent to $x_w$.  Let $v_{t-1}$ be adjacent to the endpoint $z\in P_z\not=P_v$. When $P_z=P$, we assume that $z=y$.
\begin{itemize}
\item  If $P_v\not=P_w$,  then $P, P_w, P_v, P_z$ can be decomposed into fewer paths: $$P_zzv_{t-1}P_vx_v,\   y_vP_vvu_{i+1}Py,\  P_wwu_iPx.$$
\item If $P_v=P_w$ and $v=w_j$ with $j>s$, then $P, P_w, P_z$ can also be decomposed into fewer paths: $P_zzw_{j-1}P_ww_{s+1}x_wP_wwu_iPx, \quad  y_wP_ww_ju_{i+1}Py.$
\item If $P_v=P_w$ and $v=w_j$ with $j<s$, then $P, P_w, P_z$ can be decomposed into fewer paths: $P_zzw_{j-1}P_wx_ww_{s+1}P_wy_w, \quad  xPu_iwP_ww_ju_{i+1}Py.$
\end{itemize}
There is a contradiction in each of the cases.

{\bf Case 3.3.} Both $w$ and $v$ are PE-vertices defined as in (1c). 

Let $z_1w_{s-1}, z_2w_{s+1}, z_3v_{t-1}, z_4v_{t+1}\in E(G)$ such that $z_i$ is an endpoint of $P_{z_i}\in\P$, respectively.

Let $P_w\not=P_v$.  As each endpoint is adjacent to at most two heavy vertices, we may assume that $P_{z_1}\not=P_{z_4}$ or $P_{z_1}=P_{z_4}$ and $z_1, z_4$ are the endpoints.  Then $P, P_w, P_v, P_{z_1}, P_{z_4}$ can be decomposed into fewer paths: $P_{z_1}z_1w_{s-1}P_wx_w,\quad P_{z_4}z_4v_{t+1}P_vy_v,\quad   y_wP_wwu_iPx,\quad  x_vP_vvu_{i+1}Py$.

Let $P_w=P_v$.  Assume that $v=w_t$ for some $t>s$.  Note that $P_{z_1}=P_{z_4}$ and $z_1=z_4$, or $P, P_w, z_1, z_4$ can be decomposed into fewer paths:  $xPu_iw_sP_ww_tu_{i+1}Py, P_{z_1}z_1w_{s-1}P_wx_w$ and $P_{z_4}z_4w_{t+1}P_wy_w$.  
We may also assume that $P_{z_2}=P_{z_4}$ (and similarly, $P_{z_3}=P_{z_4}$), or $P, P_w, P_{z_2}, P_{z_4}$ can be combined into fewer paths:  $xPu_iwP_wx_w,   P_{z_2}w_{s+1}P_wvu_{i+1}Py,   P_{z_4}z_4w_{t+1}P_wy_w$.
Therefore $P_{z_1}=P_{z_2}=P_{z_3}=P_{z_4}$, and $z_1$ and $z_2$ are the endpoints.  But then $P, P_w$ and $P_{z_1}$ can be combined into two paths:  $xPu_iw_sP_ww_{t-1}z_2P_{z_2}z_1w_{s-1}P_wx_w$ and $yPu_{i+1}w_tP_wy_w$, a contradiction.
\end{proof}

\begin{lem}\label{chord}
Let $P=xu_1u_2\ldots u_ky\in \P_4$ and $u_iu_j\in E(G)$ for some $i,j$ with $j\not=i-1,i+1$.
\begin{enumerate}[(a)]
\item Let $u_{i-1}$ and $u_{i+1}$ be heavy.  Then $u_{j-1}$ is not weighty,  and $u_{j-1}$ is heavy only if $u_{i-1}$ and $u_{j-1}$ are adjacent to the same endpoint of $P'\in \P-\{P\}$;  Similarly,  $u_{j+1}$ is heavy only $u_{i+1}$ and $u_{j+1}$ are adjacent to the same endpoint of $P''\in \P-\{P\}$.  Furthermore, if both $u_{j-1}$ and $u_{j+1}$ are heavy, then $P'=P''$.

\item  Let $xu_{i+1}\in E(G)$ and $u_{i-1}$ be heavy or $yu_{i-1}\in E(G)$.  Then $u_{j-1}$ is neutral when $j>i$, and $u_{j+1}$ is neutral when $j<i$.  
\end{enumerate}
\end{lem}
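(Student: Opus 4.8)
The plan is to run, throughout, the paper's one basic move. The chord $u_iu_j$ makes $G[\{u_i,\dots ,u_j\}]$ contain the spanning cycle $u_iu_{i+1}\cdots u_ju_i$, and splicing this cycle against the hypothesized edges (each heavy edge runs to an \emph{endpoint} of another path, each weighty edge to $x$ or $y$) lets me reroute $P$ together with one or two of its neighbours in $\P$. Every conclusion is then forced in one of two ways: a rerouting that lowers the number of paths contradicts the minimality condition (i), and a rerouting that produces a spanning cycle of $V(P)$ contradicts the absence of cyclic paths (Lemma~\ref{no-triangle}). Part (a) has a hypothesis symmetric in $u_{i-1},u_{i+1}$, so there I may assume $j>i$ and recover the $u_{j+1}$-statements and the $j<i$ cases by reading $P$ from $y$ to $x$; part (b) is not reversal-symmetric, so I treat $j>i$ and $j<i$ separately. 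I may also assume $j\ge i+3$, since $j=i+2$ would make $u_iu_{i+1}u_{i+2}$ a net (Lemma~\ref{net}).

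For part (b) I start from Corollary~\ref{neutral}. When $j>i$ its second bullet already gives that $u_{j-1}$ is neutral or $xu_{j-1}\in E(G)$, so it remains to exclude $xu_{j-1}\in E(G)$ using the extra hypothesis. Assume $xu_{j-1}\in E(G)$. If $yu_{i-1}\in E(G)$, then $x,u_1,\dots ,u_{i-1},y,u_k,\dots ,u_j,u_i,u_{i+1},\dots ,u_{j-1},x$ is a spanning cycle of $V(P)$ (which uses the chords $xu_{j-1}$, $u_iu_j$, $yu_{i-1}$), contradicting Lemma~\ref{no-triangle}. If instead $u_{i-1}$ is heavy, say $u_{i-1}z\in E(G)$ with $z$ an endpoint of $Q\in\P-\{P\}$, then $u_{i-1},\dots ,u_1,x,u_{j-1},\dots ,u_{i+1},u_i,u_j,u_{j+1},\dots ,y$ is a spanning path of $V(P)$ with endpoint $u_{i-1}$, and appending $Q$ through $zu_{i-1}$ merges $P$ and $Q$ into one path, a contradiction. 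For $j<i$ the third bullet of Corollary~\ref{neutral} gives that $u_{j+1}$ is neutral or $xu_{j+1}\in E(G)$, and the mirror images of these two constructions exclude $xu_{j+1}$.

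For part (a) let $a,b$ be endpoints of $P_a,P_b\in\P-\{P\}$ with $u_{i-1}a,u_{i+1}b\in E(G)$. \textbf{($u_{j-1}$ is not weighty.)} If $xu_{j-1}\in E(G)$ then $y,u_k,\dots ,u_j,u_i,u_{i-1},\dots ,u_1,x,u_{j-1},\dots ,u_{i+1}$, and if $yu_{j-1}\in E(G)$ then $x,u_1,\dots ,u_i,u_j,\dots ,y,u_{j-1},\dots ,u_{i+1}$, is a spanning path of $V(P)$ ending at the heavy vertex $u_{i+1}$; appending $P_b$ through $u_{i+1}b$ merges $P$ with $P_b$, a contradiction. \textbf{($u_{j-1}$ heavy forces a common endpoint.)} Let $u_{j-1}c\in E(G)$ with $c$ an endpoint of $P_c\in\P-\{P\}$. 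Split $V(P)$ into $\alpha=x,u_1,\dots ,u_{i-1}$ and $\beta=u_{j-1},u_{j-2},\dots ,u_{i+1},u_i,u_j,u_{j+1},\dots ,y$, whose free ends are $u_{i-1}$ and $u_{j-1}$. If $P_a\ne P_c$, append $P_a$ to $\alpha$ via $u_{i-1}a$ and $P_c$ to $\beta$ via $u_{j-1}c$, covering $V(P)\cup V(P_a)\cup V(P_c)$ by two paths in place of three; if $P_a=P_c$ but $a\ne c$, the same two appendings fuse at $P_a$ into one path covering $V(P)\cup V(P_a)$ in place of two. Both contradict (i), so $a=c$ is the common endpoint, with $P'=P_a$. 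Reading $P$ backwards yields the twin statements for $u_{j+1}$ (and $P''$).

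The crux is the last assertion. Assume $u_{j-1},u_{j+1}$ are both heavy, so by the previous step $u_{i-1},u_{j-1}$ share the endpoint $a$ of $P'$ and $u_{i+1},u_{j+1}$ share the endpoint $b$ of $P''$, and suppose for contradiction $P'\ne P''$. Using only the chord together with $u_{j-1}a$ and $u_{i+1}b$, I replace $P,P',P''$ by the two paths $x,u_1,\dots ,u_{i-1},u_i,u_j,u_{j+1},\dots ,y$ and $a',\dots ,a,u_{j-1},u_{j-2},\dots ,u_{i+1},b,\dots ,b'$, where $a',b'$ are the far endpoints of $P',P''$: the chord lets the first path skip the block $u_{i+1}\cdots u_{j-1}$, which the second path threads between $P'$ and $P''$, so together they cover $V(P)\cup V(P')\cup V(P'')$ with two paths, a contradiction; hence $P'=P''$. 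This decomposition is the step I expect to be hardest, since it is the one place where a single chord frees no vertex of $V(P)$ to serve as a merge point, so both pendant paths must be threaded at once; the delicate checks are that the two resulting walks are genuinely vertex-disjoint simple paths (exactly where $P'\ne P''$ and $j\ge i+3$ are used) and that the count truly drops from three to two. The remaining friction is bookkeeping in the boundary/small-gap indices, which I would dispatch separately using Lemma~\ref{net} or the earlier optimality conditions.
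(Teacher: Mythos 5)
Your overall strategy is the paper's own: every claim is obtained by exhibiting a rerouting of $P$ (together with one or two pendant paths attached at heavy vertices) that either merges paths, contradicting minimality condition (i), or produces a spanning cycle of $V(P)$, contradicting Lemma~\ref{no-triangle}. Your part (b), your ``not weighty'' step, and your ``Furthermore'' decomposition are essentially verbatim the paper's constructions. For the ``common endpoint'' step the paper instead cites Lemma~\ref{paired-cycle} applied to the spanning paths $xPu_{i-1}$ and $u_{j-1}Pu_{i+1}u_iu_jPy$, and then spends most of its effort ruling out the light-vertex outcomes (c1) and (c2) of that lemma --- which the stated conclusion does not actually require, since $u_{j-1}$ is assumed heavy; your inlined two-split argument is exactly the proof of case (b) of Lemma~\ref{paired-cycle} specialized to this situation, so nothing is lost there.

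The one genuine flaw is the symmetry claim at the end of your part (a). Reversing $P$ sends the configuration ``$j>i$, conclusion about $u_{j-1}$'' to ``$j<i$, conclusion about $u_{j+1}$'', because $u_{j-1}$ and $u_{j+1}$ trade places along with $u_{i-1}$ and $u_{i+1}$. So from the single case you prove you recover only two of the four combinations of (sign of $j-i$, choice of $u_{j\pm1}$); the statement about $u_{j+1}$ when $j>i$ (equivalently, about $u_{j-1}$ when $j<i$) is not the reversal of anything you proved, since there the vertex in question lies beyond $u_j$ rather than between $u_i$ and $u_j$. It needs its own split, namely $\alpha'=xu_1\ldots u_{i-1}u_iu_ju_{j-1}\ldots u_{i+1}$ (a spanning path of $V(xPu_j)$ with free end $u_{i+1}$) against $\beta'=u_{j+1}\ldots y$, after which your appending argument goes through word for word with $b$ in place of $a$. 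This is a routine repair, but as written the proof covers only half of part (a).
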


\begin{proof}
(a)   As $u_{i-1}$ and $u_{i+1}$ are heavy,  we may assume that they are adjacent to endpoints $x', x''$ of $P', P''\in \P-\{P\}$, respectively.

First note that $u_{j-1}$ is not weighty. If $xu_{j-1}\in E(G)$, then $P,P'$ can be combined into one path $P'x'u_{i-1}Pxu_{j-1}Pu_iu_jPy$, a contradiction.  If $yu_{j-1}\in E(G)$, then $P, P''$ can be combined into one path $P''x''u_{i+1}Pu_{j-1}yPu_ju_iPx$, a contradiction again.

Note that $u_{i-1}$ and $u_{j-1}$ are the endpoints of the spanning paths $xPu_{i-1}$ and $u_{j-1}Pu_iu_jPy$, respectively.  By Lemma~\ref{paired-cycle}, if $u_{j-1}$ is not neutral, then it is adjacent to $x'$, as claimed,  or it is light as in (c1) or (c2) in Lemma~\ref{paired-cycle}.

If it is the case as (c1), then $u_{j-1}$ is adjacent to a PE-vertex $v\in P'$ such that $x'v^+\in E(G)$. Now $P', P'', P$ can be combined into fewer paths: $P''u_{i+1}Pu_{j-1}vP'x'v^+P'y'$ and $xPu_iu_jPy$. So we may assume that it is the case as (c2).  Then $u_{j-1}$ is adjacent to a PE-vertex $v\in P_v=x_vP_vv^-vv^+P_vy_v\in\P-\{P,P'\}$ such that $v^-, v^+$ are adjacent to $x,y$ or $x'$. We may assume that $v^-$ is adjacent to $x$ or $y$. Then $P', P_v, P$ can be combined into two paths in either case: in the former case $x_vP_vv^-Pu_{i-1}x'P'$ and $y_vP_vvu_{j-1}Pu_iu_jPy$, and in the latter case, $P'x'u_{i-1}Px$ and $x_vP_vv^-yPu_ju_iPu_{j-1}vP_vy_v$, a contradiction. 

 For the furthermore part, if both $u_{j-1}$ and $u_{j+1}$ are heavy and adjacent to different paths, say $P'$ and $P''$, then we can replace $P, P', P''$ with $P'x'u_{j-1}Pu_{i+1}x''P''$ and $xPu_iu_jPy$, a contradiction.

(b) When $j>i$, $P$ can be rerouted so that $u_{j-1}, y$ (or $u_{j+1},y$ when $j<i$) are endpoints, it follows from Lemma~\ref{neutral-end} that $u_{j-1}$ is neutral or $xu_{j-1}\in E(G)$; but in the latter case, $P$ can be rerouted so that $u_{i-1}$ and $y$ are endpoints, thus by the same lemma, $u_{i-1}$ cannot be heavy or adjacent to $y$, a contradiction. 
\end{proof}

\begin{lem}\label{no-c}
Let $P=xu_1u_2\ldots u_ky\in \P$. If $xu_3\in E(G)$ and $u_4$ is heavy, then $u_1, u_2$ are neutral.
\end{lem}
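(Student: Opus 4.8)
We first note that $u_2$ is neutral: since $xu_3\in E(G)$, the first bullet of Corollary~\ref{neutral} (applied with $j=3$) already gives that $u_2$ is neutral. It remains to prove that $u_1$ is neutral. The vertex $u_1$ has exactly three neighbours, namely $x$ and $u_2$ on $P$ together with one further neighbour $w$, so $u_1$ fails to be neutral precisely when $w$ is the endpoint $y$ (making $u_1$ weighty), an endpoint of another path (making $u_1$ heavy), or a PE-vertex of another path (making $u_1$ light).

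The device for all three cases is to exploit the heavy vertex $u_4$. Let $x'$ be the endpoint of a path $P'\neq P$ with $x'u_4\in E(G)$, which exists because $u_4$ is heavy and $u_4\in V(P)$. Deleting the edge $u_3u_4$ splits $P$ into $A:=xu_1u_2u_3$ and $B:=u_4\ldots y$, and adding $x'u_4$ reattaches $B$ to $P'$; the resulting cover $\P^{*}$ has the same number of paths as $\P$, and because $xu_3\in E(G)$ the piece $A$ is a cyclic $4$-path (with cycle $xu_1u_2u_3x$). In particular $A$ may be rerouted as $xu_3u_2u_1$, so that $u_1$ becomes an endpoint of $A$ in $\P^{*}$, something that could not be arranged inside $P$ alone since $u_1$ and $u_3$ are opposite on the $4$-cycle.

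Now suppose $u_1$ is not neutral. If $u_1$ is weighty, then $w=y$, which after the reattachment is an endpoint of the path containing $B$; merging $A$ (ending at $u_1$) into that path along $u_1y$ yields a cover with one fewer path, contradicting condition (i). If $u_1$ is heavy, let $w=x''$ be an endpoint of a path $P''$; every endpoint of $\P$ other than $x'$ survives as an endpoint of $\P^{*}$ (and $y$ becomes one), so unless $x''=x'$ we may merge $A$ into $P''$ along $u_1x''$ and again save a path. The remaining possibility $x''=x'$ means $x'$ is adjacent to both $u_1$ and $u_4$ while $xu_3\in E(G)$, which is exactly a bad endpoint of type (v1) and hence cannot occur by Lemma~\ref{no-7}. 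Finally, if $u_1$ is light, then $w$ is a PE-vertex; since $A$ can be rerouted with $u_1$ as an endpoint, Lemma~\ref{neutral-end} applied to $A$ and the edge $u_1w$ forbids $w$ from being a PE-vertex outright, except in the exceptional $(1c)$ configuration, which we dispatch by the same path surgeries used in the proof of that lemma. In every case we reach a contradiction, so $u_1$ is neutral.

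The main obstacle is the bookkeeping after the tail $B$ has been reattached to $P'$: one must verify that the vertex onto which we wish to merge $A$ (the endpoint $x''$, the endpoint $y$, or the PE-vertex $w$) is still available as an endpoint of its path in $\P^{*}$, and then isolate the unique configuration in which it is not, namely the bad endpoint of (v1), so that Lemma~\ref{no-7} finishes it off. The light case carries the extra subtlety that $w$ may be a type-$(1c)$ PE-vertex and may even lie on $P'$ itself, so that the reattachment of $B$ could interfere with the rearrangement; here one must reuse the explicit combinations of paths from the proof of Lemma~\ref{neutral-end} and check they remain valid in $\P^{*}$.
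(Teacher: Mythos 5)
Your treatment of $u_2$ and of the weighty and heavy cases for $u_1$ is essentially sound and parallels the paper: the paper also rules out $u_1y\in E(G)$ by a rerouting, and for $u_1$ heavy it uses Lemma~\ref{paired-cycle}(b) to force $u_1$ and $u_4$ onto the same endpoint $x'$, which is precisely the bad endpoint (v1) killed by Lemma~\ref{no-7} — the same configuration you isolate by hand. (One technical caveat: your auxiliary cover $\P^{*}$ contains the cyclic $4$-path $A$, so it is not an optimal cover and you cannot literally \emph{apply} Lemma~\ref{neutral-end} or the other structural lemmas to it; you must either redo the surgeries on $P$ itself, as you in fact do for the weighty and heavy cases, or argue as the paper does directly from the cyclicity of $xPu_3$ via Lemma~\ref{paired-cycle} and Corollary~\ref{no-neighbor-of-y}.)

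The genuine gap is the light case, which is where almost all of the paper's proof lives. You write that the type-(1c) configuration can be ``dispatch[ed] by the same path surgeries used in the proof of'' Lemma~\ref{neutral-end}, but those surgeries are exactly the ones that \emph{fail} there — that is why Lemma~\ref{neutral-end} carries the exception $xv^-,yv^+\in E(G)$ rather than an outright prohibition. Moreover the exceptional configuration you would inherit from applying that lemma to $A$ involves the endpoints of the $4$-path $A$, whereas the configurations that actually survive (pinned down by Lemma~\ref{paired-cycle}(c1)/(c2), with $x_wu_4$, $x_vv^+\in E(G)$ and $v^-$ adjacent to $x$ or $y$ of the \emph{original} path $P$) are different and cannot be eliminated by any surgery that merely reduces the number of paths. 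The paper handles them by reroutings that keep $|\P|$ fixed but strictly decrease the number of weighty vertices, invoking optimality condition (vi); this in turn forces a verification that the new cover creates no $1$-, $3$-, or cyclic paths and no new bad or annoying endpoints in the sense of (iv) and (v) (including a separate subcase when $u_4Py$ is a $3$-path). None of that appears in your argument, so the light case is not proved.
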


\begin{proof}
By Lemma~\ref{no-two-consecutive}, $u_2x\not\in E(G)$,  and then by Lemma~\ref{neutral-end},  $u_2$ is neutral, and $u_1y\not\in E(G)$. Assume that $u_1$ is not neutral.  Then $u_1$ must be light or heavy. If $u_1$ is heavy, then by Lemma~\ref{paired-cycle}, $u_1$ and $u_4$ are adjacent to the same endpoint of a path, thus we have a bad endpoint, a contradiction to Lemma~\ref{no-7}. So $u_1$ must be light.

Let $u_1$ be adjacent to the PE-vertex $v\in P_v=x_vP_vv^-vv^+Py_v\in \P-\{P\}$.  Let $u_4$ be adjacent to an endpoint $x_w\in P_w\in \P-\{P\}$.  The following cases (c1) and (c2) from Lemma~\ref{paired-cycle} must be true.

(c1). $P_w=P_v$, and $x_vu_4, x_vv^+\in E(G)$ and $v^-$ is adjacent to $x$ or $y$. If $v^-x\in E(G)$, then $P,P_v$ can be combined into one path $yPu_4x_vP_vv^-xu_3u_2u_1vP_vy_v$, a contradiction. So $yv^-\in E(G)$.

Note that $P, P_v$ can be replaced by $y_vP_vvu_1u_2u_3x$ (or $y_vP_vvu_1xu_3u_2$) and cyclic path $x_vPv^-yPu_4$, so $x, u_2, y_v$ have no neighbors on $x_vP_vv^-$ and $u_4Py$, or we can combine $P, P_v$ into one path.  

If $x$ has no neighbors on $P_v$ and $u_4Py$ is not a $3$-path, let $P_1=y_vPv^+x_vP_vvu_1u_2u_3x$ and $P_2=u_4Py$; If $x$ has no neighbors on $P_v$ and $u_4Py$ is a $3$-path, let $P_1=y_vPx_vu_4Px$ and $P_2=u_5y$;  If $x$ has a neighbor on $P_v$, let $P_1=xu_3u_4Py$ and $P_2=y_vP_vv^+x_vP_vvu_1u_2$. Note that each of $x, x_v$ is adjacent to at least one weighty vertex in $P, P_v$, respectively. We replace $P, P_v$ with $P_1, P_2$ in the corresponding cases, and claim that there are fewer weighty vertices in the new cover.  In the first two cases, $x$ has one weighty neighbor on $P_1$ and $u_4$ or $u_5$ has none in $P_2$, and in the last case, $x$ has no weighty neighbors on $P_1$, and $u_2$ has at most one weighty neighbor on $P_2$.  Clearly, we do not add bad paths (1-, 3- or cyclic paths) to the cover.  To obtain a contradiction, we show below that we do not create new bad or annoying endpoints. 

Only $x$ in the last case could be a bad endpoint described in (iv), and when it is, we must have $u_2g, xg^-\in E(G)$, where $P_v=x_vP_vg^-gv^-vv^+P_vy_v$;  but in this case, we can combine $P, P_v$ into one path $y_vP_vvu_1xu_3u_2gv^-yPu_4x_vP_vg^-$, a contradiction.

We also claim that no new annoying endpoints are added.  In the first case, $u_4$ cannot be, since it has only one neighbors on $P_2$, and if $x$ is one, then $u_2$ must be adjacent to a vertex in $u_5Py$, which is impossible.  In the second case, no vertex is an annoying endpoint as $P_2$ is a $2$-path. In the last case, $x$ cannot be as it has only one neighbor on $P_1$, and if $u_2$ is one, then $u_4$ should be light and be adjacent to a PE-vertex, but $u_4x_v\in E(G)$ and $x_v$ is not a PE-vertex in $P_2$.

(c2).  $P_w\not=P_v$, and $v^-, v^+$ are adjacent to $x_w, x$ or $y$.   If $xv^-\in E(G)$, then $P, P_v, P_w$ can be combined into two paths: $P_wx_wu_4Py$ and $x_vP_vv^-xu_3u_2u_1vP_vy_v$, a contradiction. So we may assume that $xv^-, xv^+\not\in E(G)$.  It follows, by symmetry, that $yv^+\in E(G)$. But again, $P, P_v, P_w$ can be combined into two paths:  $P_wx_wu_4Pyv^+P_vy_v$ and $x_vP_vvu_1u_2u_3x$, a contradiction.
\end{proof}

\section{Weights on paths}\label{counting}

We give an initial weight of $10$ to each path in $\P$.  By Lemma~\ref{no-triangle}, all paths in $\P$ are non-cyclic paths with order more than $1$. So we may think that each endpoint of the paths in $\P$ gets an initial weight of $5$. Here is the rule to transfer weights between (vertices on) paths:
\begin{quote}
{\bf Rule to transfer weights:} Each endpoint sends a weight of $2$ to the adjacent weighty or heavy vertex, and each PE-vertex transfers $1$ to the adjacent light vertex.
\end{quote}

For convenience, we let $w(P)$ be the final weight on a path or a segment $P$. For a path $P\in \P$, let $s_1(P), s_2(P), s_3(P)$ and $n_o(P)$ be the number of weighty and heavy vertices,  light vertices, neutral vertices, and PE-vertices, respectively.  Then 
 \begin{equation}\label{weight}
  w(P)=2+2s_1(P)+s_2(P)-n_o(P).
  \end{equation}

\begin{definition}
Let $P=xu_1u_2\ldots u_ky\in \P$.  For each $1\le i\le j\le k-1$,
\begin{itemize}
\item a neutral vertex $u_i$ is {\em free} if there are neither heavy nor weighty vertices on $P$ between $u_i$ and an endpoint ($x$ or $y$);
\item $u_iPu_j$ is a {\em heavy segment} if both $u_i$ and $u_j$ are heavy or weighty and there is no neutral vertices on it;  (so a single heavy or weighty vertex is also a heavy segment)
\item $u_iPu_j$ is a {\em neutral segment} if $u_i, u_j$ are non-free neutral and there is no heavy or weighty vertices on it. (so a single neutral vertex is also a neutral segment)
\end{itemize}
\end{definition}

Note that a light vertex may be on a heavy or neutral segment.  By Corollary~\ref{neutral} and Lemma~\ref{no-consecutive}, there are at least one neutral vertices between any two heavy vertices, so a heavy segment with more than one vertices must contain at least one weighty vertex,  and contain at most three vertices, and when it contains three vertices, it must be a light vertex adjacent to two weighty vertices.  Let a {\em heavy pair} be a pair of vertices in a heavy segment that are both heavy or weighty.  So every heavy segment contains either $0$ or $1$ heavy pair.  Let $n_h(P)$ be the number of heavy pairs on $P\in \P$.  So $n_h(P)\le 4$ for each $P\in \P$. 

Similarly, a pair of neutral vertices in a neutral segment that are consecutive or separated by a light vertex is called a {\em neutral pair}.  So a neutral segment with $s$ neutral vertices contains $s-1$ {\em neutral pairs}.  A heavy segment (and similarly, a neutral segment) is {\em maximal} if it is not contained in a larger heavy segment (neutral segment).

\begin{lem}\label{weight-path}
For a path $P\in \P$ with $w(P)>|V(P)|$, $$n_h(P)\ge n_o(P)+n_q(P)+n_r(P),$$ where $n_h(P), n_o(P), n_q(P), n_r(P)$ are the numbers of heavy pairs, PE-vertices, neutral pairs, and free neutral vertices on $P$, respectively.
\end{lem}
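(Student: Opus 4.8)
The plan is to convert the weight hypothesis into a purely numerical inequality among the counts $s_1(P),s_2(P),s_3(P),n_o(P)$ and then read off the conclusion from the segment structure of $P$. First I would record the identity $|V(P)|=2+s_1(P)+s_2(P)+s_3(P)$ (the two endpoints together with the $s_1(P)+s_2(P)+s_3(P)$ interior vertices, each of which is weighty/heavy, light, or neutral) and combine it with the weight formula (\ref{weight}) to obtain
\[
w(P)-|V(P)| = s_1(P)-n_o(P)-s_3(P).
\]
Thus the hypothesis $w(P)>|V(P)|$ is exactly $s_1(P)-n_o(P)-s_3(P)\ge 1$; in particular $s_1(P)\ge 1$, so $P$ carries at least one heavy-or-weighty vertex.

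Next I would count the two kinds of pairs through the maximal segments of $P$. Write $a$ for the number of maximal heavy segments and $b$ for the number of maximal neutral segments. By Corollary~\ref{neutral} and Lemma~\ref{no-consecutive} every heavy segment contains at most two heavy-or-weighty vertices, so a heavy segment holding $c$ of them contributes $c-1\in\{0,1\}$ heavy pairs; summing over segments gives $n_h(P)=s_1(P)-a$. Likewise a maximal neutral segment with $s$ neutral vertices contributes $s-1$ neutral pairs, and the neutral vertices lying in neutral segments are precisely the non-free ones, of which there are $s_3(P)-n_r(P)$; hence $n_q(P)=\bigl(s_3(P)-n_r(P)\bigr)-b$.

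The crux is the identity $a=b+1$. Here I would first observe that (since $s_1(P)\ge 1$) a neutral vertex is free exactly when it lies before the first, or after the last, heavy-or-weighty vertex of $P$, so the non-free neutral vertices are precisely those trapped between two heavy-or-weighty vertices. Reading $P$ from $x$ to $y$, the heavy-or-weighty vertices therefore split into maximal heavy segments separated by the blocks of non-free neutrals: two consecutive maximal heavy segments are separated by exactly one maximal neutral segment (the gap between them must contain a neutral vertex, else they would merge), while no neutral segment precedes the first or follows the last heavy segment. Hence heavy and neutral segments alternate, beginning and ending with a heavy segment, giving $a=b+1$.

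Finally I would substitute. Using $a=b+1$,
\[
n_h(P)=s_1(P)-b-1,\qquad n_o(P)+n_q(P)+n_r(P)=n_o(P)+s_3(P)-b,
\]
so the desired inequality $n_h(P)\ge n_o(P)+n_q(P)+n_r(P)$ is equivalent to $s_1(P)-n_o(P)-s_3(P)\ge 1$, which is exactly the hypothesis established in the first step. The main obstacle is the bookkeeping behind $a=b+1$: one must check that light vertices (which may sit inside either kind of segment but, by themselves, neither create nor destroy a pair) do not disturb the alternation, that free neutral vertices occur only at the two ends, and that every gap separating two heavy segments yields exactly one neutral segment. Once this alternation is pinned down, the remaining steps are the routine algebra above.
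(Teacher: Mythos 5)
Your proof is correct and takes essentially the same route as the paper's: both express $n_h(P)$ and $n_q(P)$ through the counts of maximal heavy and neutral segments, use the alternation giving one fewer neutral segment than heavy segments, and reduce the claimed inequality to the identity coming from \eqref{weight}. The only difference is cosmetic --- you spell out why $a=b+1$ holds (and note that the hypothesis forces $s_1(P)\ge 1$), a point the paper simply asserts.
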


\begin{proof}
As above, assume that $P$ contains $a$ maximal heavy segments, then $P$ contains $a-1$ maximal neutral segments. It follows that $s_1(P)=n_h(P)+a$ and $s_3(P)=n_q(P)+a-1+n_r(P)$. By \eqref{weight}, 
\begin{align*}
w(P)&=2+s_1(P)+s_2(P)+(n_h(P)+s_3(P)-n_q(P)+1-n_r(P))-n_o(P)\\
&=|V(P)|+n_h(P)+1-(n_q(P)+n_r(P)+n_o(P)).
\end{align*}
It follows that if $w(P)>|V(P)|$ then $n_h(P)\ge n_q(P)+n_r(P)+n_o(P)$, as claimed. 
\end{proof}

For convenience, we call the number $n_o(P)+n_q(P)+n_r(P)$ the {\em good number of $P$}, and in particular, it is called the {\em good number of the segment} if $P$ is a segment of a path.

\begin{lem}\label{ClaimA}
If for some $i>1$, $xu_i\in E(G)$ and $u_{i+1}$ is heavy (so $\{u_i, u_{i+1}\}$ is a heavy pair), then
 the good number of $xPu_i$ is at least $1$, with equality if and only if $i=4$ and $u_2$ is heavy.
 \end{lem}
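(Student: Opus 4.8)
The plan is to analyse the segment $xPu_i = xu_1\cdots u_i$, whose last vertex $u_i$ is weighty, by locating its good vertices explicitly. Since $xu_i\in E(G)$, Corollary~\ref{neutral} gives that $u_{i-1}$ is neutral; this is the seed. For the lower bound I show the segment contains a free neutral vertex, a neutral pair, or a PE-vertex. If no vertex of $u_1,\dots,u_{i-2}$ is heavy or weighty, then nothing heavy or weighty lies between $u_{i-1}$ and $x$, so $u_{i-1}$ is free and $n_r\ge 1$. Otherwise I apply the PE-test to $u_{i-1}$: writing $u^-=u_{i-2}$, $u^+=u_i$, the hypothesis $xu_i\in E(G)$ is exactly the clause ``$xu^+\in E(G)$'' of (1a)/(1b), so $u_{i-1}$ is a PE-vertex (hence $n_o\ge1$) whenever $u_{i-2}$ is heavy or $yu_{i-2}\in E(G)$ and $u_{i-1}$ has a neighbour off $P$. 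The residual possibilities are that $u_{i-2}$ is weighty through $x$ (so $xu_{i-2}\in E(G)$), or that the (1a)/(1b) hypothesis holds but $u_{i-1}$ has all its neighbours on $P$; in both I look to the left, using that $G$ is $3$-regular, so $x$ has the path-neighbour $u_1$ and at most two chords. Then $xu_i$ and at most one further $xu_{i-2}$ are the only chords at $x$, and Lemmas~\ref{no-two-consecutive} and~\ref{no-consecutive} let me recover a free neutral vertex (typically $u_1$) or a neutral pair among $u_1,\dots,u_{i-3}$.

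For the equality clause I run a case analysis on $i$. When $i=3$, Lemma~\ref{no-c} forces both $u_1$ and $u_2$ to be neutral, and each is free, so the good number is at least $2$. When $i\ge 5$, or when $i=4$ but $u_2$ is not heavy, the witness at $u_{i-1}$ is accompanied by a second witness coming from the extra vertices to its left (again the left-most vertex $u_1$ is a free neutral, or a neutral pair appears), so the good number is at least $2$. This leaves the single case $i=4$ with $u_2$ heavy. Here $u_3=u_{i-1}$ is trapped between the heavy $u_2$ and the weighty $u_4$, so it lies in no neutral pair and is not free; the task is to show there is exactly one good vertex. The only potential witnesses are $u_3$ (good precisely when it is a PE-vertex, by (1b)) and $u_1$ (good precisely when it is neutral, since it is then automatically free, and $u_1$ is never a PE-vertex), so, since the lower bound already gives at least one witness, equality amounts to showing these two cannot hold simultaneously.

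The hard part will be exactly this exclusion, together with ruling out $i=2$. For $i=2$ the segment $xu_1u_2$ has room for only one witness, so the lemma can hold only if $i=2$ never occurs; I would obtain this from optimality by rerouting $P$ to $u_1xu_2u_3\cdots y$ and showing, via conditions (vi)/(vii), that making $u_1$ an endpoint yields a strictly better cover, just as in the $j=i+1=3$ step of Lemma~\ref{no-two-consecutive}. For the $i=4$ exclusion I assume $u_3$ is a PE-vertex and $u_1$ is neutral and seek a contradiction: the chord $u_1u_3$ is impossible because $u_1u_2u_3$ would be a net (Lemma~\ref{net}), so the third neighbour of $u_1$ is either a farther chord $u_1u_j$ on $P$ or an interior vertex of another path; in each case I reroute through the chord $xu_4$, sending $P$ to $u_3u_2u_1xu_4\cdots y$ (which makes $u_3$ an endpoint adjacent to its former light partner), and use Lemmas~\ref{neutral-end} and~\ref{25-well} to expose a path-combination or a strictly better endpoint, contradicting the optimality of $\P$. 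Keeping the light-vertex and PE bookkeeping consistent across these rerouting moves is where the real difficulty lies.
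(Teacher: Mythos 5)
Your overall plan --- locate free neutral vertices, neutral pairs, or PE-vertices on $xPu_i$ and count them --- is the right one, and your treatment of $i=3$ via Lemma~\ref{no-c} matches the paper. But there are two genuine gaps. First, your lower-bound argument never invokes the two structural facts that make the counting possible: since $xu_i\in E(G)$, the segment $xPu_i$ is cyclic and $u_{i+1}$ is heavy, so by Corollary~\ref{no-neighbor-of-y} the segment $xPu_{i-1}$ contains \emph{no} neighbour of $y$, and by Lemma~\ref{paired-cycle} it contains \emph{at most one} heavy or light vertex. Without these, your fallback of ``recovering a free neutral vertex or a neutral pair among $u_1,\dots,u_{i-3}$'' from Lemmas~\ref{no-two-consecutive} and~\ref{no-consecutive} does not go through: Lemma~\ref{no-consecutive} only forbids adjacent heavy/light vertices, so a priori heavy vertices could alternate with single neutral vertices, leaving no neutral pair and no free neutral, while the PE-witness at $u_{i-1}$ evaporates whenever its third neighbour lies on $P$. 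Relatedly, your case split examines only whether $u_{i-2}$ is heavy or weighty; it misses the case where $u_{i-2}$ is neither but some $u_j$ with $j<i-2$ is weighty, i.e.\ $x$ has a second chord $xu_j$ deep inside the segment. That is the hardest case in the paper's proof (it requires the sub-analysis $j=2$ with the forced edge $u_1u_3$ and the chain ruling out $u_4$ neutral/light, and the bootstrap for $j\ge 3$ landing at $j=4$, $i=6$ and a contradiction via Lemma~\ref{no-two-consecutive}); your sketch does not engage with it.

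Second, your equality analysis aims at the wrong direction. In the boundary configuration $i=4$ with $u_2$ heavy, the vertex $u_1$ is automatically neutral (it cannot be heavy or light by Lemma~\ref{no-consecutive}, nor adjacent to $y$ by Corollary~\ref{no-neighbor-of-y}) and hence automatically free, so your proposed exclusion ``$u_3$ is a PE-vertex and $u_1$ is neutral cannot coexist'' reduces to showing $u_3$ is never a PE-vertex --- a claim that is not obviously true, is not proved in the paper, and is not needed anywhere: every later application of this lemma uses only the implication ``good number $\le 1$ forces $i=4$ and $u_2$ heavy,'' which is exactly what the paper's proof establishes (together with the bound good number $\ge i-3\ge 1$). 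You should drop the attempted converse and instead spend the effort on the second-chord case described above, where the real work lies.
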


\begin{proof}  First of all,  $xPu_{i-1}$ contains no neighbors of $y$ by Corollary~\ref{no-neighbor-of-y}, and   contains at most one heavy or light vertex by Lemma~\ref{paired-cycle}.  We may assume that the good number of $xPu_i$ is at most $1$.

Clearly, $i>2$, or $xu_1u_2$ is a net.  Also, $i\not=3$, or by Lemma~\ref{no-c}, both $u_1, u_2$ are neutral (and free), so the good number of $xPu_i$ is $2$.    So $i\ge 4$, and $xPu_{i-1}$ contains at least one heavy, weighty, or light vertices, or there are at least two free neutral vertices on $xPu_i$. 

First assume that $xPu_{i-1}$ contains no weighty vertices. Then it must contain exactly one heavy or light vertex, by Lemma~\ref{paired-cycle}.  If it contains a light vertex, then all neutral vertices on $xPu_{i-1}$ are free, thus the good number is at least $i-1-1\ge 2$.  Therefore, it contains a heavy vertex.  If $u_1$ is heavy, then $u_2, \ldots, u_{i-1}$ are all neutral, so it contains $i-3$ distinct neutral pairs, namely, $\{u_2, u_3\}, \{u_3, u_4\}, \ldots, \{u_{i-2}, u_{i-1}\}$;  If $u_2$ is heavy, then $u_1$ is free and $\{u_3, u_4\}, \ldots, \{u_{i-2}, u_{i-1}\}$ are $i-4$ distinct neutral pairs; if $u_1, u_2$ are not heavy, then $u_1, u_2$ are free neutral vertices.   So in either case, the good number is at least $i-3$.  Then $i=4$ and $u_1$ or $u_2$ is heavy.  By Lemma~\ref{no-7}, $u_1$ cannot be heavy, so $u_2$ is heavy.

Now assume that $xPu_{i-1}$ contains a weighty vertex, say $u_j$ for some $1<j<i$.  Then $xu_j\in E(G)$. If $j=2$, then $u_1u_3\in E(G)$, or we will have a net.  Note that $u_4\not=u_i$ (otherwise $u_4u_5$ is a cut-edge),  $u_4$ cannot be neutral (otherwise, $\{u_3,u_4\}$ is a neutral pair and $u_1$ is free, so the good number of $xPu_i$ is at least $2$), and $u_4$ cannot be light  (otherwise, $u_5$ must be neutral by Corollary~\ref{neutral-end}, thus $\{u_3, u_5\}$ is a neutral pair and $u_1$ is free, so the good number of $xPu_i$ is at least $2$).  So $u_4$ is heavy.  Now $u_5$ is neutral and $u_6$ must be $u_i$, or $\{u_5, u_6\}$ is a neutral pair.  But then by Lemma~\ref{no-two-consecutive}, $u_5$ must be adjacent to some vertex on $xPu_i$ other than $u_4,u_6$, which is impossible.

So  $j\ge 3$,  and one vertex on $xPu_{j-1}$ must be heavy or light (otherwise there are at least two free neutral vertices).   As $xPu_{j-1}$ contains no weighty vertices,  the above argument shows that $xPu_{j-1}$ has good number at least $2$, unless $j=4$ and $u_2$ is heavy.  In the bad case, $i=j+2=6$ (for otherwise $\{u_{j+1}, u_{j+2}\}$ is a neutral pair), and by Lemma~\ref{no-two-consecutive}, $u_1, u_3, u_5$ must be adjacent only to vertices on $xPu_6$, which is impossible.  
\end{proof}

\begin{lem}\label{heavy-triple}
Let $P=xu_1\ldots u_ky\in \P$. If for some $1<i<k$, $xu_i, yu_{i+2}\in E(G)$ and $u_{i+1}$ is light, then $w(P)\le |V(P)|$.
\end{lem}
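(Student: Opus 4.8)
The plan is to convert the desired inequality $w(P)\le |V(P)|$ into a comparison between the number of weighty/heavy vertices and the number of neutral/PE-vertices, and then to prove that comparison by an injective assignment. Recall from \eqref{weight} that $w(P)=2+2s_1(P)+s_2(P)-n_o(P)$, while partitioning $V(P)$ into its two endpoints together with the weighty/heavy, light, and neutral interior vertices gives $|V(P)|=2+s_1(P)+s_2(P)+s_3(P)$. Subtracting,
\[
w(P)-|V(P)|=s_1(P)-s_3(P)-n_o(P),
\]
so it suffices to prove $s_1(P)\le s_3(P)+n_o(P)$; that is, the weighty and heavy vertices are at most as many as the neutral vertices plus the PE-vertices. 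First I would record the local picture: since $xu_i,yu_{i+2}\in E(G)$ the vertices $u_i,u_{i+2}$ are weighty, by Corollary~\ref{neutral} both $u_{i-1}$ and $u_{i+3}$ are neutral, and $xPu_i$ and $u_{i+2}Py$ are cyclic; as $u_{i+1}$ is light, Lemma~\ref{paired-cycle} shows each of these cyclic halves contains at most one heavy vertex. The remaining chord of $x$ (besides $xu_i$), if it lands on $P$, must lie in $F:=xPu_{i-1}$ and hits some $u_a$ with $2\le a\le i-2$ by Lemma~\ref{no-two-consecutive}; symmetrically for the remaining chord of $y$, hitting some $u_b$ in $S:=u_{i+3}Py$.

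With the structure in hand, the main step is to define an injection from the weighty/heavy vertices to the neutral and PE-vertices. I would assign to each weighty vertex $u_m$ adjacent to $x$ (so $m\in\{a,i\}$) the neutral vertex $u_{m-1}$, which is neutral by Corollary~\ref{neutral}; to each weighty vertex $u_m$ adjacent to $y$ the neutral vertex $u_{m+1}$; and to each heavy vertex $u_m$ a neutral path-neighbour. Such a neighbour exists by Lemma~\ref{no-consecutive}: both $u_{m-1},u_{m+1}$ are then neutral or weighty, and they cannot both be weighty, since that would force the separating vertex $u_{i-1}$ (already neutral by Corollary~\ref{neutral}) to be heavy. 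Because the weighty targets move toward the nearer endpoint and remain inside their own cyclic half, the $F$-targets $u_{a-1},u_{i-1}$ are distinct and likewise on the $S$-side, so among weighty vertices the assignment is injective.

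The delicate point—and what I expect to be the main obstacle—is ruling out collisions between a heavy vertex's target and a weighty vertex's target, and handling the boundary cases near $u_1$. A heavy vertex $u_m$ can collide with the weighty target $u_{a-1}$ only when $u_m=u_{a-2}$; in that situation I would reassign $u_m$ to its other (neutral) neighbour, using the spacing forced by Lemma~\ref{no-two-consecutive} to keep that slot free, and the very tightest configurations, where the reassignment would be blocked because some short segment $xu_1u_2$ closes into a triangle, are excluded because that triangle would be a net (Lemma~\ref{net}); this is exactly the ``$i>2$'' phenomenon already isolated in Lemma~\ref{ClaimA}, so I expect to borrow its net-avoidance argument almost verbatim. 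Carrying this clash-resolution through all positions of $u_a,u_b$ and the at-most-two heavy vertices, while keeping every image neutral and distinct, is where the real bookkeeping lies.

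Finally I would dispose of the exceptional configuration of Corollary~\ref{no-neighbor-of-y}, in which some vertex of $xPu_{i-1}$ is adjacent to $y$. That case forces $u_{i+1}$ to be adjacent to a PE-vertex $v=v_s$ with $xv_{s-1},yv_{s+1}\in E(G)$, so both the second chords of $x$ and of $y$ leave $P$; hence the only weighty vertices of $P$ are $u_i$ and $u_{i+2}$, giving $s_1(P)\le 2+2=4$, and the four neutral vertices $u_{i-1},u_{i+3}$ together with the neutral path-neighbours of the (at most two) heavy vertices already yield $s_3(P)\ge s_1(P)$. In every case we obtain $s_1(P)\le s_3(P)+n_o(P)$, and therefore $w(P)\le |V(P)|$.
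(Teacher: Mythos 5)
Your reformulation $w(P)\le|V(P)|\iff s_1(P)\le s_3(P)+n_o(P)$ is correct and is just a re-packaging of the paper's criterion (the paper instead groups vertices into heavy and neutral segments and reduces to $n_h(P)\ge n_o(P)+n_q(P)+n_r(P)$), and the structural facts you assemble (Corollary~\ref{neutral}, Lemma~\ref{paired-cycle}, Corollary~\ref{no-neighbor-of-y}, Lemma~\ref{no-two-consecutive}) are the right ones. The problem is that the central step --- the injection from heavy/weighty vertices into neutral vertices and PE-vertices --- is never actually constructed, and the one clash-resolution rule you do state fails in exactly the configurations that carry the difficulty. Concretely: if the second chord of $x$ is $xu_a$ with $a=i-3$ and $u_{i-2}$ is heavy, then $u_{i-2}$'s neighbours are $u_{i-3}=u_a$ (weighty) and $u_{i-1}$ (already the image of $u_i$); there is no ``other neutral neighbour'' to fall back on, and your rule, which only treats the collision at $u_{a-1}$ via $u_m=u_{a-2}$, says nothing about this. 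Likewise, if $u_1$ is heavy and $xu_3=xu_i\in E(G)$, the only candidate image of $u_1$ is $u_2=u_{i-1}$, already taken by $u_i$; this is rescued only when $u_2$ is a PE-vertex (so it can absorb two units through the $+n_o$ term), and when instead $u_2$ has a chord $u_2u_t$ into $P$ the missing unit must be recovered at $u_{t\pm1}$ via Lemma~\ref{chord} --- a mechanism your injection has no slot for. The same problem recurs in your ``exceptional case'' paragraph, where the heavy vertices' neutral neighbours can coincide with $u_{i-1}$ or $u_{i+3}$.

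These are precisely the cases the paper's proof is built around: it invokes Lemma~\ref{ClaimA} to show that a segment $xPu_j$ with $xu_j\in E(G)$ and $u_{j+1}$ heavy already carries good number at least $1$, with the unique tight case $j=4$, $u_2$ heavy then contradicting the ``at most one heavy vertex in the two halves'' bound from Lemma~\ref{paired-cycle}; and it uses Lemma~\ref{chord} together with Corollary~\ref{neutral} to turn every failed PE-vertex into a neutral pair elsewhere on $P$. You explicitly defer ``the real bookkeeping'' of the clash resolution, but that bookkeeping \emph{is} the proof of this lemma; as written the argument has a genuine gap there, and the reassignment rule you propose is not strong enough to close it.
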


\begin{proof}
Assume that $w(P)>|V(P)|$. Then by Lemma~\ref{weight-path}, $n_h(P)\ge n_o(P)+n_q(P)+n_r(P)$. 

By Lemma~\ref{paired-cycle}, $xPu_i$ and $yPu_{i+2}$ contain at most one heavy vertex altogether.  By Corollary~\ref{no-neighbor-of-y}, $xPu_i$ contains no neighbors of $y$ and $u_{i+1}Py$ contains no neighbor of $x$. We may assume that $yPu_{i+2}$ contains no heavy vertices.   As $i+2\not=k-1$ (otherwise $u_{i+2}u_ky$ is a net),  $yPu_{i+2}$ contains at least one free neutral vertex, so the good number of $yPu_{i+2}$ is at least one.

If $xPu_i$ contains no heavy vertices, then similarly the good number of $xPu_i$ is also at least one, so the good number of $P$ is at least $2$,  but $P$ has only one heavy pair, a contradiction.  Thus,  we may assume that $xPu_i$ contains exactly one heavy vertex.

If $xPu_i$ contain no heavy pairs, then its good number must be zero. It follows that $u_1$ is heavy (otherwise $n_r\ge 1$), $xu_3\in E(G)$ (otherwise $n_q\ge 1$), and $i\ge 3$.  As $u_2$ cannot be a PE-vertex (otherwise $n_o+n_r+n_q\ge 2$ but $n_h=1$), $u_2u_a\in E(G)$ for some $a\not=1,3$. By Lemma~\ref{no-two-consecutive}, $i>a$.    By Lemma~\ref{chord} (b), $u_{a-1}$ is neutral, so $n_q\ge 1$ thus $n_q+n_r\ge 2$, a contradiction.

Thus,  we may assume that $xPu_i$ contains a heavy pair, that is, $xu_j\in E(G)$ and $u_{j+1}$ is heavy for some $2\le j<i-1$. By Lemma~\ref{ClaimA}, the good number of $xPu_j$ is at least $1$.  Thus it must be exactly $1$, $j=4$ and $u_2$ is heavy. But then $xPu_i$ contains two heavy vertices, a contradiction. 
\end{proof}

\begin{lem}\label{main-lemma}
For each $P\in \P$, $w(P)\le |V(P)|$.
\end{lem}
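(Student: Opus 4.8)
The plan is to argue by contradiction. Suppose some path $P=xu_1u_2\cdots u_ky\in\P$ has $w(P)>|V(P)|$. The computation inside Lemma~\ref{weight-path} gives $w(P)=|V(P)|+n_h(P)+1-g$, where $g=n_o(P)+n_q(P)+n_r(P)$ is the good number of $P$; hence $w(P)>|V(P)|$ forces $g\le n_h(P)$, and a contradiction will follow as soon as I prove the uniform bound $g\ge n_h(P)+1$ for every $P$. Since $n_h(P)\le 4$, I would run the argument as a case analysis on $n_h(P)\in\{0,1,2,3,4\}$. If $P$ has no heavy or weighty vertex then $s_1(P)=0$ and $w(P)=2+s_2(P)-n_o(P)\le 2+s_2(P)+s_3(P)=|V(P)|$, so I may assume $P$ has at least one maximal heavy segment; and when $n_h(P)=0$ (every heavy segment a singleton) it is enough to exhibit one free neutral vertex adjacent to such a singleton, which Corollary~\ref{neutral} provides.

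The engine for $n_h(P)\ge 1$ is Lemma~\ref{ClaimA}. Every heavy pair contains a weighty vertex, and such a vertex $u_i$ satisfies $xu_i\in E(G)$ or $yu_i\in E(G)$; I say the pair is \emph{anchored} at the corresponding endpoint. Since each endpoint has exactly two neighbours off $P$, at most two weighty vertices are adjacent to $x$ and at most two to $y$, so at most two pairs are anchored at each endpoint (a pair with two weighty vertices uses two of these slots but still counts once toward $n_h(P)$, which only helps). For a pair anchored at $x$ through $u_i$, with heavy partner $u_{i+1}$, Lemma~\ref{ClaimA} bounds the good number of the segment $xPu_i$ below by $1$, and by $2$ unless $i=4$ and $u_2$ is heavy; the mirror statement holds at $y$.

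To reach $g\ge n_h(P)+1$ I must combine these per-pair estimates. The estimates coming from one endpoint are nested (the segments $xPu_i$ share the end $x$), and those from opposite ends may overlap, so the combination is carried out by a case analysis on $n_h(P)$ and on the positions of the anchored pairs rather than by a single additivity statement. In the generic situation at least one invocation of Lemma~\ref{ClaimA} is non-tight and already yields the extra unit. The tight configurations are precisely those isolated by Lemma~\ref{ClaimA} ($i=4$ with $u_2$ heavy); there I would extract one more good object from the part of $P$ beyond the last heavy vertex, or use Lemma~\ref{heavy-triple} to dispose of the doubly anchored size-three segment $u_i$(weighty)--$u_{i+1}$(light)--$u_{i+2}$(weighty) outright. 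The recurring nuisance is a single non-free neutral vertex sitting between two consecutive heavy segments, since it is neither free nor part of a neutral pair and so contributes nothing to $g$; here I expect Lemma~\ref{no-two-consecutive} to force a chord at that vertex and Lemma~\ref{chord}(b) to promote a neighbour into a genuine neutral pair, failing which a rearrangement of $P$ contradicts the optimality of $\P$.

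The hardest bookkeeping is the case $n_h(P)=4$: all four off-path endpoint edges are used, the four anchored segments practically exhaust $P$, and the two ends must jointly supply $g\ge 5$. Here I must count good objects without double-counting the neutral pairs at the interfaces between adjacent heavy segments, and check that the unavoidable free neutral vertex near the middle of $P$ survives every tight configuration. Ruling out the simultaneously tight configurations at both ends is where most of the casework lies, and I expect Lemmas~\ref{no-7}, \ref{annoying}, and \ref{no-c} to furnish the required contradictions whenever a local rerouting would otherwise improve the cover, thereby closing the argument.
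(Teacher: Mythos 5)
Your overall strategy coincides with the paper's: assume $w(P)>|V(P)|$, reduce via Lemma~\ref{weight-path} to showing the good number exceeds $n_h(P)$, and run a case analysis on $n_h(P)$ driven by Lemma~\ref{ClaimA}. But there are two concrete gaps. The first concerns heavy pairs consisting of \emph{two weighty} vertices, i.e.\ $xu_i,yu_{i+1}\in E(G)$. Lemma~\ref{ClaimA} is stated only for a weighty vertex whose partner $u_{i+1}$ is \emph{heavy}, so your ``engine'' produces no lower bound on the good number of any segment attached to such a pair; your parenthetical remark that a doubly weighty pair ``uses two slots but still counts once, which only helps'' addresses only the count of anchored pairs, not the missing estimate. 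This is precisely the paper's Case~1, and its subcase~1.2 (where $\{u_i,u_{i+1}\}$ is the \emph{only} heavy pair, so one must still manufacture two good objects) is a substantial independent argument: one shows $u_k$ cannot be neutral or weighty, hence is heavy, that $u_{k-1}$ is neutral, that $u_{k-2}$ is neither neutral, heavy, nor adjacent to $y$, hence light, and finally that $u_{k-3}$ is neutral, producing the neutral pair $\{u_{k-3},u_{k-1}\}$. Nothing in your plan generates this chain.

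The second gap is the case $n_h(P)=0$. You claim it suffices to exhibit one free neutral vertex adjacent to a singleton heavy segment, ``which Corollary~\ref{neutral} provides.'' Corollary~\ref{neutral} only yields that such a neighbour is \emph{neutral}, not \emph{free}: a neutral vertex wedged between two heavy or weighty singletons (say $u_1$ heavy, $u_2$ neutral, $u_3$ weighty) is not free and, being a singleton neutral segment, contributes no neutral pair either, so it adds nothing to the good number. In the extremal configuration where $P$ has no weighty vertices and $u_1,u_3,u_5,\dots$ are all heavy, there is no free neutral vertex and no neutral pair anywhere, and the paper must instead examine the third neighbour of $u_2$, invoke Lemma~\ref{chord}~(a) to force both ends of the resulting chord to attach to the same path $Q$, and then derive a contradiction from the neighbour of $u_4$. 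Your plan as written would stall at exactly this point. The remaining cases ($n_h\in\{1,2,3,4\}$ with one weighty and one heavy vertex per pair) do follow the paper's route, and your identification of the tight configuration $i=4$ with $u_2$ heavy, and of the role of Lemmas~\ref{no-two-consecutive} and~\ref{chord}~(b) in promoting isolated neutral vertices to neutral pairs, is accurate.
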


\begin{proof}
Let $P=xu_1u_2\ldots u_ky\in \P$. By Lemma~\ref{no-7}, $P$ is non-cyclic and is not a $1$-path or a $3$-path.  Assume that $w(P)>|V(P)|$.  Let $n_h, n_o, n_q, n_r$, as defined in the Lemma~\ref{weight-path},  be the numbers of heavy pairs,  PE-vertices, neutral pairs, and free neutral vertices on $P$, respectively. By Lemma~\ref{weight-path}, 
\begin{equation}
n_o+n_q+n_r\le n_h\le 4.
\end{equation}

By Lemma~\ref{heavy-triple} and Lemma~\ref{no-consecutive}, we may assume that each heavy pair consists of two consecutive vertices on $P$, at least one of which is a weighty vertex. 

\smallskip

{\bf Case 1.} $P$ contains a heavy pair consisting of two weighty vertices. Assume that $xu_i, yu_{i+1}\in E(G)$ for some $1<i<k-1$.  Note that by Lemma~\ref{no-two-consecutive}, $xu_{i-1}, yu_{i+2}\not\in E(G)$.

{\bf Case 1.1.} $P$ contains at least two heavy pairs.

First assume that $P$ contains another heavy pair consisting of two weighty vertices, say $\{u_j, u_{j+1}\}$ with $xu_j, yu_{j+1}\in E(G)$ for some $j>i$.  In this case, there are no other heavy pairs.  So $n_h=2$.   As $P$ can be rerouted so that $u_1$ and $u_k$ are the endpoints,   $u_1, u_k$ must be neutral by Corollary~\ref{neutral-end}, thus free, so $n_r=2$ and $n_o=n_q=0$.   Then $u_{j-1}$ is not in a neutral pair. It follows that $u_{j-2}$ is heavy or $u_{j-2}=u_{i+1}$, and in either case, $u_{j-1}u_s\in E(G)$ for some $s\not=j-2, j$, or $u_{j-1}$ is a PE-vertex (a contradiction to $n_o=0$).  By Lemma~\ref{chord}, $u_{s+1}$ (if $s<j$) or $u_{s-1}$ (if $s>j$) is neutral, so we have a neutral pair, a contradiction to $n_q=0$.

Now we assume that $\{u_i, u_{i+1}\}$ is the only heavy pair containing two weighty vertices. Note that if a heavy pair is $\{u_s, u_{s+1}\}$ such that $xu_s\in E(G)$ and $u_{s+1}$ is heavy, then by Corollary~\ref{no-neighbor-of-y}, $s<i$.   Similarly for such a heavy pair involving $y$.  Also note that $n_h\le 3$. 

Consider the case $n_h=3$.    Let $\{u_s, u_{s+1}\}$ and $\{u_t, u_{t-1}\}$ be the other two heavy pairs such that $s<i, t>i+1$, $xu_s, yu_t\in E(G)$, and $u_{s+1}, u_{t-1}$ are heavy.  By Lemma~\ref{paired-cycle}, $P$ contains no other heavy or light vertices.  Now by Lemma~\ref{ClaimA},  each of $xPu_i$ and $u_{t+1}Py$ has the good number at least two, thus the good number of $P$ is at least $4$, a contradiction.

Now let $n_h=2$ and $\{u_s, u_{s+1}\}$ be the only other heavy pair such that $xu_s\in E(G)$ and $u_{s+1}$ is heavy.  By Lemma~\ref{ClaimA}, $xPu_s$ has the good number at least $2$ or $s=4$ such that $u_2$ is heavy.  In the latter case, $u_{i+1}Py$ contains no heavy or light vertices by Lemma~\ref{paired-cycle}, so has at least two free neutral vertices, thus the good number of $P$ is at least $3$, a contradiction.  For the former case, the good number of $u_{i+1}Py$ is $0$. It follows that $u_k$ must be heavy, and by Lemma~\ref{paired-cycle}, there are no other heavy or light vertices on $P$ other than $u_{s+1}$ and $u_k$.  Then $s=3$ and $i=6$.  Now $u_{i-1}$ must be adjacent $u_t$ for some $t\not=i-2, i$.  Clearly, $t\not=1,2, i+2$.  Then $t>i+1$, and by Corollary~\ref{neutral},  $u_{t-1}$ is neutral, so $\{u_{t-1}, u_t\}$ is a neutral pair, a contradiction to the fact that the good number of $u_{i+1}Py$ is $0$.

{\bf Case 1.2.} $\{u_i, u_{i+1}\}$ is the only heavy pair on $P$.  Then the good number of $P$ is at most one.  By symmetry, we may assume that the good number of $xPu_i$ is at most one and the good number of $u_{i+1}Py$ is $0$.  It follows that $u_k$ cannot be neutral (otherwise it is free).

Let $xu_j\in E(G)$ for some $j>i$.  Then $P$ can be rerouted so that $u_1$ is an endpoint.  By Lemma~\ref{neutral-end}, $u_1$ is neutral (and free).  So $n_r\ge 1$. It follows that $n_r=1$ and $n_o=n_q=0$.  By Lemma~\ref{no-two-consecutive}, $u_{j-1}u_s\in E(G)$ for some $s\not=j-2, j$.  By Corollary~\ref{neutral}, $u_{s+1}$ (if $s<j$) or $u_{s-1}$ (if $s>j$) is neutral, thus $n_q\ge 1$, a contradiction. It follows that $u_{i+1}Py$ contains no neighbor of $x$, and by symmetry, $xPu_i$ contains no neighbour of $y$. 

Now that $u_k$ cannot be neutral or weighty, it must be light or heavy.  But if it is light, then $u_{k-1}$ cannot be a neighbour of $y$ (otherwise $yu_ku_{k-1}$ is a net), or heavy by Lemma~\ref{no-consecutive}, so $u_{k-1}$ is neutral, thus the good number of $u_{i+1}Py$ is not $0$, a contradiction.  So we assume that $u_k$ is heavy.

Observe that $u_{k-1}$ is neutral.   We may assume that $u_{k-2}$ is not neutral, or we have a neutral pair $\{u_{k-2}, u_{k-1}\}$ on $u_{i+1}Py$, a contradiction.  So $u_{k-2}$ is heavy, light or $yu_{k-2}\in E(G)$. 

Note that $u_{k-2}$ cannot be heavy.  Suppose otherwise. By Lemma~\ref{paired-cycle}, $xPu_i$ contains no heavy or light vertices.   So $u_1, u_2$ are free neutral vertices. It follows that $n_r\ge 2$, a contradiction.

We also have $u_{k-2}y\not\in E(G)$. Suppose otherwise. Then $u_{k-1}u_s\in E(G)$ for some $i+1<s<k-2$, or $u_{k-1}$ is a PE-vertex.    By Lemma~\ref{chord}, $u_{s+1}$ is neutral, thus $\{u_s, u_{s+1}\}$ is a neutral pair on $u_{i+1}Py$, a contradiction.

So let $u_{k-2}$ be light.  By Lemma~\ref{no-consecutive},  $u_{k-3}$ is not heavy or light.  But $u_{k-3}$ is not a neighbour of $x$ or $y$, so $u_{k-3}$ must be neutral.  Now $\{u_{k-3}, u_{k-1}\}$ is a neutral pair, a contradiction to the assumption that the good number of $u_{i+1}Py$ is $0$.

\smallskip

{\bf Case 2.}  $P$ contains no heavy pairs consisting of only weighty vertices.

{\bf Case 2.1.} $n_h=0$. It follows that $n_o=n_q=n_r=0$.  Recall that $k>1$, by Lemma~\ref{no-triangle}. 

Assume first that $P$ contains no weighty vertices.  Then $u_1, u_k$ are heavy, and the vertices on $u_1Pu_k$ are alternatively heavy and neutral.  Let $v\not=u_1, u_3$ be the third neighbor of $u_2$. Then $v=u_t\in P$ for some $t>3$, or $u_2$ is a PE-vertex (this contradicts to the fact that $n_o=0$).  But $u_{t-1}$ and $u_{t+1}$ are heavy. So  by Lemma~\ref{chord} (a), $u_1,u_{t-1}$ are adjacent to one endpoint of a path $Q\not=P$ and $u_3, u_{t+1}$ are adjacent to the other endpoint of $Q$.  Note that $t\not=4$, or $u_2u_3u_4$ is a net.   Now $u_4$ is neutral and must be adjacent to $v'\not\in P$ or $u_s$ for some $s>5$. In the former case, $u_{i+3}$ is a PE-vertex, and in the latter case,  $u_{s-1}$ or $u_{s+1}$  cannot be heavy by Lemma~\ref{chord} (a), a contradiction.

Now let $xu_i\in E(G)$ for some $i>1$, and we may assume that $xu_j\not\in E(G)$ for $1<j<i$.  First let $i>2$.   Then  $u_{i-2}$ cannot be neutral (or $\{u_{i-2}, u_{i-1}\}$ is a neutral pair) or light (otherwise, $u_{i-1}$ is free if $i=3$, or $u_{i-3}$ must be neutral thus $\{u_{i-3}, u_{i-1}\}$ is a neutral pair).  So $u_{i-2}$ is heavy or $yu_{i-2}\in E(G)$.  Now $u_{i-1}$ must be adjacent to $u_t$ for some $t\not=i-3,i-1$, or it is a PE-vertex.  By Lemma~\ref{chord} (b), $u_{t-1}$ (if $t>i-1$) or $u_{t+1}$ (if $t<i-3$) is neutral, thus $u_t$ is in a neutral pair, a contradiction to $n_q=0$.  Now let $i=2$.  Then $u_1u_3\in E(G)$, or $xu_1u_2$ is a net.  So $xu_j\in E(G)$ for some $j>3$, or $u_1$ is a better endpoint of $P$ than $x$.  Clearly $j\not=4$, or $u_4u_5$ is a cut-edge.  So $u_{j-2}$ must be heavy or adjacent to $y$. Now a similar argument as above shows $n_q>0$, a contradiction.

{\bf Case 2.2.}  $n_h=1$.  Let $\{u_i, u_{i+1}\}$ be the only heavy pair with $xu_i\in E(G)$ for some $1<i<k$  and $u_{i+1}$ is heavy.   As the good number of $xPu_i$ is at most $1$,  by Lemma~\ref{ClaimA},  $i=4$ and $u_2$ is  heavy.  So $u_1$ is free, and $n_q=n_o=0$.  

Now,  $u_3u_t$ for some $t\not=2,4$, or $u_3$ is a PE-vertex, a contradiction to $n_o=0$.  Clearly, $t\not=1$, so $t\ge 6$. By Lemma~\ref{neutral-end}, $u_{t-1}$ is neutral or $xu_{t-1}\in E(G)$. In the former case, we have a neutral pair, thus $n_q>0$, a contradiction. As $y$ has no neighbors on $xu_1u_2u_3$, Lemma~\ref{no-two-consecutive} implies that the latter case cannot happen.

{\bf Case 2.3.}  $n_h=2$.  Let $\{u_i, u_{i+1}\}$ and $\{u_{t-1}, u_t\}$ be the two heavy pairs for some $i<t-1$.

First assume that $xu_i, xu_{t-1}\in E(G)$ and $u_{i+1}, u_t$ are both heavy.  By Lemma~\ref{paired-cycle}, $u_{i+1},u_t$ are adjacent to the same endpoint $x'\in P'\in \P$, and they are the only heavy/light vertices on $xPu_t$. As $i\ge 3$ (otherwise there is a net) and $u_1, u_2, \ldots, u_{i-1}$ are free neutral vertices, $n_r\ge 2$. It follows that $n_r=2$ and $i=3$, and $n_o=n_q=0$.  By Lemma~\ref{no-two-consecutive}, $u_2$ can only be adjacent to vertices on $P$, thus $u_2u_a\in E(G)$ for some $a\ge 4$, and by Lemma~\ref{chord}, $u_{a-1}$ is neutral, so $n_q\ge 1$, a contradiction.

Note that if $xu_{t-1}, yu_{i+1}\in E(G)$ and $u_t, u_i$ are heavy, then $P$ can be rerouted so that $u_i$ and $u_t$ are endpoints, a contradiction to Lemma~\ref{neutral-end}.  Thus, we assume that $1<i<t<k$, $xu_i, yu_t\in E(G)$ and $u_{i+1}, u_{t-1}$ are heavy, and $y$ has no neighbors on $xPu_i$ and $x$ has no neighbors on $u_tPy$.  Let $u_{i+1}$ be adjacent to an endpoint $x'\in P'\in \P-\{P\}$.

By Lemma~\ref{ClaimA},  $xPu_{i-1}$ and $u_{t+1}Py$ both have the good number at least $1$, thus $i=4, t=k-3$ and $u_2$ and $u_{k-1}$ are heavy.  So there are two free neutral vertices, and $n_q=n_o=0$.   Note that $i+1<t-1$.  For otherwise, by Lemma~\ref{paired-cycle},  $xPu_i$ and $u_tPy$ contain at most one heavy/light vertex altogether, a contradiction.

Now, $u_3u_t$ for some $t\not=2,4$, or $u_3$ is a PE-vertex.  Clearly, $t\not=1$, so $t\ge 6$. By Lemma~\ref{neutral-end}, $u_{t-1}$ is neutral or $xu_{t-1}\in E(G)$. In the former case, we have a neutral pair, a contradiction. As $y$ has no neighbors on $xu_1u_2u_3$, Lemma~\ref{no-two-consecutive} implies that the latter case cannot happen.

{\bf Case 2.4.}  $n_h\in \{3,4\}$.  As in Case 2.3,   we may assume that the neighbors of $x$ are $u_1, u_i, u_j$ and the neighbor of $y$ are $u_s, u_t, u_k$ with $1<i<j<s\le t<k$ such that $u_{i+1}, u_{j+1}, u_{s-1}, u_{t-1}$ are all heavy.

By Lemma~\ref{paired-cycle}, $u_{i+1}, u_{j+1}$ are adjacent to the same endpoint of a path in $\P$, and there are no other heavy or light vertices on $xPu_j$. Note that $i\ge 3$ (otherwise $xu_1u_2$ is a net), so $xPu_i$ contains at least $i-1\ge 2$ free neutral vertices, i.e., $u_1, u_2,\ldots, u_{i-1}$.  Similarly, $u_sPy$ contains at least $k-t\ge 2$ free neutral vertices when $n_h=4$ (that is, $s<t$). Note that when $n_h=3$ (that is, $s=t$), the good number of $u_sPy$ is $1$, thus by Lemma~\ref{ClaimA}, $s=k-3$, $u_{k-1}$ is heavy and $u_k$ is free.  So $P$ contains $n_h$ free neutral vertices. It follows that $n_q=n_o=0$, and $i=3$ and $j=6$.    By Lemma~\ref{no-two-consecutive},  $u_5$ must be adjacent to $u_1$ or $u_2$,  but then $P, P'$ can be combined into one path: $P'x'u_4u_5u_1u_2u_3xu_6Py$ if $u_5u_1\in E(G)$, and $P'x'u_4u_3xu_1u_2u_5Py$ if $u_5u_2\in E(G)$, a contradiction.
\end{proof}

\begin{proof}[Proof of Theorem~\ref{main}:]   
Consider an optimal path cover $\P$ of $G$, and assign a weight of $10$ to each path in $\P$.  By Lemma~\ref{main-lemma}, the total weight is $10|\P|=\sum_{P\in \P} w(P)\le \sum_{P\in \P} |V(P)|=n$.  So $|\P|\le n/10$, that is, $G$ has a path cover with at most $n/10$ paths. 
\end{proof}

{\bf Acknowledgement:} The author would like to thank the referees for their valuable comments, which helped to simplify the proof and improve the presentation of the paper significantly.

\end{document}